\documentclass[smallcondensed]{svjour3}
\usepackage{amsmath,amssymb,array,booktabs}
\usepackage{graphicx,graphics,epsfig}
\usepackage{grffile}
\usepackage{caption}
\usepackage{amsmath}
\usepackage{color}
\usepackage{verbatim}
\usepackage{amsfonts}
\usepackage{epsfig}
\usepackage{multirow}

\spnewtheorem{assumption}{Assumption}{\bf}{\it}
\spnewtheorem{corrollary}{Corrollary}{\bf}{\it}

\newcommand{\norm}[1]{\left\|#1\right\|}
\newcommand{\abs}[1]{\left|#1\right|}
\newcommand{\rset}{\mathbb{R}}

\journalname{Comput. Optim. Appl.}

\begin{document}

\title{A random coordinate descent algorithm for optimization problems
with composite objective function and linear coupled
constraints\thanks{The research leading to these results has
received funding from: the European Union (FP7/2007--2013) under
grant agreement no 248940; CNCS (project TE-231, 19/11.08.2010);
ANCS (project PN II, 80EU/2010); POSDRU/89/1.5/S/62557. \\
The authors thank Y. Nesterov and F. Glineur for inspiring
discussions. }}
\titlerunning{Random coordinate descent method for composite
linearly constrained optimization}

\author{Ion Necoara  \and Andrei Patrascu}
\authorrunning{I. Necoara  \and  A. Patrascu}

\institute{I. Necoara and A. Patrascu are  with the Automation and
Systems Engineering Department, University Politehnica Bucharest,
060042 Bucharest, Romania, Tel.: +40-21-4029195, \\ Fax:
+40-21-4029195; \email{\texttt{ion.necoara@acse.pub.ro,
andrei.patrascu@acse.pub.ro}}}

\date{Received:  25 March 2012 / Accepted: date}

\maketitle
\begin{abstract}
In this paper we propose a variant of the random coordinate descent
method for solving linearly constrained convex optimization problems
with composite objective functions. If the smooth part of the
objective function has Lipschitz continuous gradient, then we prove
that our method obtains  an $\epsilon$-optimal solution in ${\cal
O}(N^2/\epsilon)$ iterations, where $N$ is the number of blocks. For
the class of problems with cheap coordinate derivatives we show that
the new method is faster than methods based on full-gradient
information. Analysis for the rate of convergence in probability is
also provided. For strongly convex functions our method converges
linearly. Extensive numerical tests confirm that on very large
problems, our method is much more numerically efficient than methods
based on full gradient information.

\keywords{Coordinate descent \and  composite objective function \and
linearly coupled constraints \and randomized algorithms \and
convergence rate ${\cal O}(1/\epsilon)$.}
\end{abstract}


\section{Introduction}
The basic problem of interest in this paper is the following convex
minimization problem with composite objective function:
\begin{equation} \label{model}
\begin{split}
 & \min\limits_{x \in \rset^n} F(x) \quad  \left(:= f(x) + h(x)\right) \\
 & \text{s.t.:} \;\;  a^T x=0,
\end{split}
\end{equation}
where $f:\rset^n \rightarrow \rset$ is a smooth convex function
defined by a black-box oracle,  $h:\rset^n \rightarrow \rset$ is a
general closed convex function  and $a \in \rset^{n}$. Further, we
assume that function $h$ is coordinatewise separable and simple (by
simple we mean that we can find a closed-form solution for the
minimization of  $h$ with some simple auxiliary function). Special
cases of this model include linearly constrained smooth optimization
(where $h \equiv 0$) which was  analyzed in
\cite{NecNes:12,XiaBoy:06}, support vector machines (where $h$ is
the indicator function of some box constraint  set)
\cite{HusKel:06,LisSim:05} and composite optimization (where $a=0$)
\cite{RicTak:11,TseYun:06,TseYun:07,TseYun:09}.

Linearly constrained optimization problems with composite objective
function arise in many applications such as  compressive sensing
\cite{CanRom:06}, image processing \cite{CheDon:01}, truss topology
design \cite{NesShp:12}, distributed control \cite{NecNed:11},
support vector machines \cite{TseYun:07}, traffic equilibrium and
network flow problems \cite{Ber:03} and many other areas. For
problems of moderate size there exist many iterative algorithms such
as Newton, quasi-Newton or projected gradient methods
\cite{DaiFle:06,FerMun:03,LinLuc:09}. However, the problems that we
consider in this paper have the following features: {\it the
dimension of the optimization variables} is very large such that
usual methods based on full gradient computations are prohibitive.
Moreover, {\it the incomplete structure of information} that may
appear when the data are distributed in space and time, or when
there exists  lack of physical memory and  enormous complexity of
the gradient update can also be an obstacle for full gradient
computations. In this case, it appears that a reasonable approach to
solving problem \eqref{model} is to use {\it (block) coordinate
descent methods}. These methods were among the first optimization
methods studied in literature \cite{Ber:99}. The main differences
between all variants of coordinate descent methods consist of the
criterion of choosing at each iteration the coordinate over which we
minimize our objective function and the complexity of this choice.
Two classical criteria, used often in these algorithms, are the
cyclic  and the greedy (e.g., Gauss-Southwell) coordinate search,
which significantly differ by the amount of computations required to
choose  the appropriate index. The rate of convergence  of cyclic
coordinate search methods has been determined recently in
\cite{BecTet:12,SahTew:12}. Also, for coordinate descent methods
based on the Gauss-Southwell rule, the convergence rate is given in
\cite{TseYun:06,TseYun:07,TseYun:09}. Another interesting approach
is based on random coordinate descent, where the coordinate search
is random. Recent complexity results on random coordinate descent
methods were obtained by Nesterov in \cite{Nes:10} for smooth convex
functions. The extension to composite objective functions  was given
in \cite{RicTak:11,RicTak:12} and for the grouped Lasso problem in
\cite{QinSch:10}. However, all these papers studied optimization
models where the constraint set is decoupled (i.e., characterized by
Cartesian product). The rate analysis of a random coordinate descent
method for linearly coupled constrained optimization problems with
smooth objective function was developed in \cite{NecNes:12}.

In this paper we present a  random coordinate descent method suited
for large scale problems with composite objective function.
Moreover, in our paper we focus on linearly coupled constrained
optimization problems (i.e., the constraint set  is coupled through
linear equalities). Note that the model considered in this paper is
more general than the one from \cite{NecNes:12}, since we allow
composite objective functions. We prove for our method an expected
convergence rate of order $\mathcal{O}(\frac{N^2}{k})$, where $N$ is
number of blocks and $k$ is the iteration counter. We show that for
functions with cheap coordinate derivatives the new method is much
faster, either in worst case complexity analysis, or numerical
implementation, than schemes based on full gradient information
(e.g., coordinate gradient descent method developed in
\cite{TseYun:09}). But our method also offers  other important
advantages, e.g., due to the randomization, our algorithm is easier
to analyze and implement, it leads to more robust output and is
adequate for modern computational architectures (e.g, parallel or
distributed architectures). Analysis for rate of convergence in
probability is also provided. For strongly convex functions we prove
that the new method converges linearly. We also provide extensive
numerical simulations and compare   our algorithm against
state-of-the-art methods from the literature on three large-scale
applications: support vector machine, the Chebyshev center of a set of
points and random generated optimization  problems with an
$\ell_1$-regularization term.

The paper is organized as follows.
In order to present our main results, we introduce some notations
and assumptions for problem \eqref{model} in Section
\ref{preliminaries}. In Section \ref{RCD} we present  the new random
coordinate descent  (RCD) algorithm. The main results of the paper
can be found in Section \ref{convresults}, where we derive the rate
of convergence in expectation, probability and for the strongly
convex case. In Section \ref{generalization} we generalize the
algorithm and extend the previous results to a more general model.
We also analyze its complexity and compare it  with other methods
from the literature, in particular  the coordinate descent method of
Tseng \cite{TseYun:09} in Section \ref{complanalysis}. Finally, we test the
practical efficiency of our algorithm through extensive numerical experiments
in Section \ref{numexp}.


\subsection{Preliminaries}\label{preliminaries}

We work in the space $\rset^n$ composed of column vectors.  For $x,y
\in \rset^n$ we denote:
\begin{equation*}
 \langle x,y \rangle = \sum\limits_{i=1}^n x_i y_i.
\end{equation*}
We use the same notation $\langle \cdot,\cdot \rangle$ for spaces of different dimensions.
If we fix a norm $\norm{\cdot}$ in $\rset^n$, then its dual norm is defined by:
\begin{equation*}
 \norm{y}^*= \max\limits_{\norm{x}=1} \langle y,x \rangle.
\end{equation*}
We assume that the entire space dimension is decomposable into $N$
blocks:
\begin{equation*}
 n=\sum\limits_{i=1}^N n_i.
\end{equation*}
We denote by $U_i$ the blocks of the identity matrix:
\begin{equation*}
 I_n = \left[ U_1 \ \dots \ U_N \right],
\end{equation*}
where $U_i \in \rset^{n \times n_i}$. For some vector $x \in
\rset^n$, we use the notation $x_i$ for the $i$th block in $x$,
$\nabla_i f(x)=U_i^T \nabla f(x)$ is the $i$th block in the
gradient of the function $f$ at $x$, and
$\nabla_{ij}f(x)=\begin{bmatrix} \nabla_i f(x) \\ \nabla_j f(x)
\end{bmatrix}$. We denote by $supp(x)$ the number of
nonzero coordinates in $x$. Given a matrix $A \in \rset^{m \times n}$,
we denote its nullspace by $Null(A)$.
In the rest of the paper  we consider
local Euclidean norms in all spaces $\rset^{n_i}$, i.e., $\norm{x_i}
=\sqrt{(x_i)^{T} x_i}$ for all $x_i \in \rset^{n_i}$ and $i=1,
\dots,N$.

\noindent For model \eqref{model} we make the following assumptions:
\begin{assumption}
\label{assumpbeg1}
The smooth and nonsmooth parts of the objective function in optimization model \eqref{model}  satisfy
the following properties:
\begin{enumerate}
\item[(i)] Function $f$ is convex and has block-coordinate Lipschitz continuous gradient:
\begin{equation*}
\norm{\nabla_i f(x+U_ih_i) - \nabla_if(x)} \le L_i \norm{h_i} \quad
\; \forall x \in \rset^n, \;\;  h_i \in \rset^{n_i}, \;\; i=1,
\dots, N.
\end{equation*}
\item[(ii)] The nonsmooth function $h$ is convex and coordinatewise separable.
\end{enumerate}
\end{assumption}
Assumption \ref{assumpbeg1} $(i)$ is typical for composite
optimization, see e.g., \cite{Nes:07,TseYun:09}. Assumption
\ref{assumpbeg1} $(ii)$ covers many applications as we further
exemplify. A special case of coordinatewise separable function that
has attracted a lot of attention in the area of signal processing
and data mining is the $\ell_1$-regularization \cite{QinSch:10}:
\begin{equation}\label{norm1}
  h(x) = \lambda \norm{x}_1,
\end{equation}
where $\lambda>0$.  Often, a large $\lambda$ factor induces sparsity
in the solution of optimization problem \eqref{model}. Note that the
function $h$ in \eqref{norm1} belongs to the general class of
coordinatewise separable piecewise linear/quadratic functions with
$\mathcal{O}(1)$ pieces. Another special case is the box indicator
function, i.e.:
\begin{equation}\label{indicator}
  h(x) = \mathbf{1}_{[l, u]} =
       \begin{cases}
          0, & l \le  x \le u\\
          \infty, &\text{otherwise}.
       \end{cases}
\end{equation}
Adding box constraints to a quadratic objective function $f$ in
\eqref{model} leads e.g., to  support vector machine (SVM) problems
\cite{ChaLin:11,TseYun:07}. The reader can easily find many other examples
of function $h$ satisfying Assumption \ref{assumpbeg1} $(ii)$.

\noindent Based on Assumption 1 $(i)$, the following inequality can
be derived \cite{Nes:04}:
\begin{equation}\label{assump1}
f(x+U_ih_i) \le f(x) + \langle\nabla_if(x), h_i \rangle +
\frac{L_i}{2}\norm{h_i}^2 \quad
  \forall x \in \rset^n, \;\;  h_i \in \rset^{n_i}.
\end{equation}
In the sequel, we use the notation:
\begin{equation*}
 L= \max\limits_{1\le i\le N}L_i.
\end{equation*}
For $\alpha \in [0,1]$ we introduce the extended norm on $\rset^n$
similar as   in \cite{Nes:10}:
\begin{equation*}
 \norm{x}_{\alpha} = \left(\sum\limits_{i=1}^N L_i^{\alpha}\norm{x_i}^2 \right)^{\frac{1}{2}}
\end{equation*}
and its dual norm
\begin{equation*}
 \norm{y}_{\alpha}^* = \left(\sum\limits_{i=1}^N \frac{1}{L_i^{\alpha}}
 \norm{y_i}^2 \right)^{\frac{1}{2}}.
\end{equation*}
Note that these norms satisfy the Cauchy-Schwartz inequality:
\begin{equation*}
 \norm{x}_{\alpha} \norm{y}_{\alpha}^* \ge \langle x, y \rangle \;\;  \forall x, y  \in \rset^n.
\end{equation*}

\noindent For a simpler exposition we use a context-dependent
notation as follows: let $x \in \rset^n$ such that $x = \sum_{i=1}^N
U_i x_i$, then $x_{ij} \in \rset^{n_i + n_j}$ denotes a two
component vector $x_{ij}=\begin{bmatrix} x_i
\\x_j \end{bmatrix}$. Moreover, by addition with a vector in the extended space $y\in
\rset^n$, i.e.,  $y+x_{ij}$, we understand $y+ U_ix_i+U_jx_j$. Also,
by inner product $\langle y, x_{ij}\rangle$ with vectors $y$ from
the extended space $\rset^n$ we understand $\langle y, x_{ij}\rangle
= \langle y_i, x_i\rangle  + \langle y_j, x_j\rangle$.

\noindent Based on Assumption \ref{assumpbeg1} $(ii)$ we can derive
from \eqref{assump1} the following result:
\begin{lemma} \label{lemmalij}
Let function $f$ be convex and satisfy Assumption \ref{assumpbeg1}.
Then, the function  $f$ has componentwise Lipschitz continuous
gradient w.r.t.  every pair $(i,j)$, i.e.:
\begin{equation*}
\norm{\begin{bmatrix}
\nabla_{i}f(x\!+\!U_is_i\!+\!U_js_j)\\
\nabla_{j}f(x\!+\!U_is_i\!+\!U_js_j)\\
\end{bmatrix}\!-\!
\begin{bmatrix}
\nabla_i f(x)\\
\nabla_j f(x)
\end{bmatrix}
}^*_{\alpha}\!\le\!
L_{ij}^{\alpha}\norm{
\begin{bmatrix}
s_i\\
s_j\\
\end{bmatrix}}_{\alpha}  \; \forall x \in \rset^n,\; s_i\in\rset^{n_i}, \; s_j\in\rset^{n_j},
\end{equation*}
where we define $L_{ij}^{\alpha} = L_i^{1-\alpha} + L_j^{1-\alpha}$.\\
\end{lemma}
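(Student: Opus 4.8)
The plan is to reduce the claim to two elementary facts about gradient differences of $f$ and then combine them through the definition of the dual norm $\norm{\cdot}_\alpha^*$. Throughout, fix $x$, $s_i$, $s_j$ and set $\Delta_i = \nabla_i f(x+U_is_i+U_js_j) - \nabla_i f(x)$ and $\Delta_j = \nabla_j f(x+U_is_i+U_js_j) - \nabla_j f(x)$, so that the left-hand side of the lemma equals $\left( L_i^{-\alpha}\norm{\Delta_i}^2 + L_j^{-\alpha}\norm{\Delta_j}^2\right)^{1/2}$. Changes of $\nabla_i f$ produced by a perturbation of the \emph{same} block $i$ are already controlled by Assumption \ref{assumpbeg1}$(i)$; the difficulty is that $\Delta_i$ also contains the change of $\nabla_i f$ caused by a perturbation of the \emph{other} block $j$, and block-coordinate Lipschitz continuity says nothing directly about such cross terms. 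Establishing a bound on these cross terms is the crux of the proof, and it is exactly where convexity of $f$ must enter.

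The key estimate I would prove first is the cross-term bound $\norm{\nabla_i f(x+U_js_j) - \nabla_i f(x)} \le \sqrt{L_iL_j}\,\norm{s_j}$ (and its symmetric counterpart). To get it, write $p = \nabla_i f(x) - \nabla_i f(x+U_js_j)$ and, for an arbitrary $r \in \rset^{n_i}$, combine the descent inequality \eqref{assump1} applied in block $i$ at the base point $x+U_js_j$ with the first-order convexity lower bound for $f$ at the base point $x$, both evaluated at the common point $x+U_js_j+U_ir$. After cancelling the shared function value this yields $\langle p, r\rangle \le D_j + \frac{L_i}{2}\norm{r}^2$, where $D_j = f(x+U_js_j) - f(x) - \langle \nabla_j f(x), s_j\rangle$ does not depend on $r$. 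Minimizing the right-hand side over $r$ (taking $r = p/L_i$) gives $\norm{p}^2 \le 2L_iD_j$, and a final application of \eqref{assump1} in block $j$ bounds $D_j \le \frac{L_j}{2}\norm{s_j}^2$, producing $\norm{p} \le \sqrt{L_iL_j}\,\norm{s_j}$. I expect this optimization-over-$r$ step, together with the pairing of an upper (descent) and a lower (convexity) bound on the same function value, to be the main obstacle: it is the only place where we extract information about $\nabla_i f$ from a perturbation of a different block, and it relies essentially on convexity.

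With the cross-term bound in hand, the rest is routine. Inserting the intermediate point $x+U_is_i$ and using the triangle inequality, the diagonal part of $\Delta_i$ is bounded by $L_i\norm{s_i}$ via Assumption \ref{assumpbeg1}$(i)$ and the cross part by $\sqrt{L_iL_j}\,\norm{s_j}$ via the key estimate applied at the base point $x+U_is_i$, so that $\norm{\Delta_i} \le \sqrt{L_i}\,T$ with $T = \sqrt{L_i}\,\norm{s_i} + \sqrt{L_j}\,\norm{s_j}$; symmetrically $\norm{\Delta_j} \le \sqrt{L_j}\,T$. Substituting these into the dual norm gives $\left(L_i^{-\alpha}\norm{\Delta_i}^2 + L_j^{-\alpha}\norm{\Delta_j}^2\right)^{1/2} \le T\,(L_i^{1-\alpha}+L_j^{1-\alpha})^{1/2} = T\sqrt{L_{ij}^\alpha}$.

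Finally I would bound $T$ itself. Writing $\sqrt{L_i}\,\norm{s_i} = L_i^{(1-\alpha)/2}\cdot L_i^{\alpha/2}\norm{s_i}$ and likewise for block $j$, the Cauchy--Schwarz inequality gives $T \le \sqrt{L_i^{1-\alpha}+L_j^{1-\alpha}}\cdot\sqrt{L_i^\alpha\norm{s_i}^2 + L_j^\alpha\norm{s_j}^2} = \sqrt{L_{ij}^\alpha}\,\norm{\begin{bmatrix} s_i\\ s_j\end{bmatrix}}_\alpha$. Combining this with the previous estimate yields $T\sqrt{L_{ij}^\alpha} \le L_{ij}^\alpha\,\norm{\begin{bmatrix} s_i\\ s_j\end{bmatrix}}_\alpha$, which is exactly the claimed inequality and completes the proof.
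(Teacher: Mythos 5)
Your proof is correct, but it takes a genuinely different route from the paper's. The paper argues by co-coercivity directly at the level of the pair $(i,j)$: from \eqref{assump1} it first derives the lower bound $f(x)-f^*\ge \frac{1}{2L_{ij}^{\alpha}}\norm{\nabla_{ij}f(x)}^{*2}_{\alpha}$, applies this to the two auxiliary functions obtained by subtracting from $f$ its linearizations at $x$ and at $x+s_{ij}$ (each is convex, still satisfies Assumption~\ref{assumpbeg1}~$(i)$, and attains its minimum at the corresponding base point), adds the two resulting inequalities to obtain $\frac{1}{L_{ij}^{\alpha}}\norm{\nabla_{ij}f(x+s_{ij})-\nabla_{ij}f(x)}^{*2}_{\alpha}\le\langle\nabla_{ij}f(x+s_{ij})-\nabla_{ij}f(x),s_{ij}\rangle$, and finishes with Cauchy--Schwarz. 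You instead isolate the cross-block estimate $\norm{\nabla_i f(x+U_js_j)-\nabla_i f(x)}\le\sqrt{L_iL_j}\,\norm{s_j}$ --- proved by the same pairing of a block-$i$ descent upper bound against a convexity lower bound, but deployed one block at a time and optimized over the probe direction $r$ --- and then assemble the pair bound by the triangle inequality through the intermediate point $x+U_is_i$ followed by a weighted Cauchy--Schwarz. I checked the details: the optimization $r=p/L_i$ does give $\norm{p}^2\le 2L_iD_j$, the bound $D_j\le\frac{L_j}{2}\norm{s_j}^2$ is exactly \eqref{assump1}, and the exponent bookkeeping in the final Cauchy--Schwarz step is right, so you land on precisely the constant $L_{ij}^{\alpha}=L_i^{1-\alpha}+L_j^{1-\alpha}$. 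Both arguments use convexity through essentially the same co-coercivity mechanism, just at different granularities: the paper's version is shorter and passes in one shot to the joint dual norm (and extends verbatim to the $(m+1)$-block setting of Section~\ref{generalization}), while yours is more elementary and makes explicit the reusable intermediate fact that for a convex $f$ the mixed block Lipschitz constants are controlled by the geometric means $\sqrt{L_iL_j}$.
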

\begin{proof}
Let $f^* = \min\limits_{x \in \rset^n} f(x)$.
Based on \eqref{assump1} we have for any pair $(i,j)$:
\begin{equation*}
\begin{split}
 f(x)-f^* &\ge \max\limits_{l \in \{1,\dots N\}} \frac{1}{2L_l}  \norm{\nabla_l f(x)}^{2}
 \ge \max\limits_{l \in \{i,j \} }\frac{1}{2L_l} \norm{\nabla_l f(x)}^{2}\\
& \ge \frac{1}{2 \left(L_i^{1-\alpha} + L_j^{1-\alpha} \right)}
\left(\!\frac{1}{L_i^{\alpha}}\norm{\nabla_if(x)}^{2}\!+\!\frac{1}{L_j^{\alpha}}
\norm{\nabla_jf(x)}^{2}\!\right)\\
& = \frac{1}{2L_{ij}^{\alpha}} \norm{\nabla_{ij}f(x)}^{*2}_{\alpha},
\end{split}
\end{equation*}
where in the third inequality we used that $\alpha a + (1-\alpha) b
\le \max\{a,b\}$ for all $\alpha \in [0,1]$. Now, note that the
function $g_1(y_{ij})=f(x+y_{ij}-x_{ij})-f(x)- \langle\nabla
f(x),y_{ij}-x_{ij}\rangle $ satisfies the Assumption
\ref{assumpbeg1} $(i)$. If we apply the above inequality to
$g_1(y_{ij})$ we get the following relation:
\begin{equation*}
f(x+y_{ij}-x_{ij}) \ge f(x) + \langle\nabla f(x),y_{ij}-x_{ij}\rangle +\frac{1}{2L_{ij}^{\alpha}}\norm{\nabla_{ij}f(x+y_{ij}-x_{ij}) - \nabla_{ij}f(x)}_{\alpha}^{*2}.
\end{equation*}
On the other hand, applying the same inequality to $g_2(x_{ij})=f(x)
- f(x+y_{ij}-x_{ij}) + \langle \nabla
f(x+y_{ij}-x_{ij}),y_{ij}-x_{ij}\rangle$, which also satisfies
Assumption \ref{assumpbeg1} $(i)$, we have:
\begin{align*}
f(x)\ge f(x+y_{ij}-x_{ij})+\langle\nabla f(x+&y_{ij}-x_{ij}),y_{ij}-x_{ij}\rangle+\\
& \frac{1}{2L_{ij}^{\alpha}}
\norm{\nabla_{ij}f(x+y_{ij}-x_{ij})-\nabla_{ij}f(x)}_{\alpha}^{*2}.
\end{align*}
Further, denoting $s_{ij}=y_{ij}-x_{ij}$ and adding up the resulting inequalities we
get:
\begin{equation*}
\begin{split}
 \frac{1}{L_{ij}^{\alpha}} \norm{ \nabla_{ij}f(x+s_{ij})-\nabla_{ij}f(x)}^{*2}_{\alpha} &\le\langle\nabla_{ij}f(x+s_{ij})
-\nabla_{ij} f(x), s_{ij}\rangle \\
& \le \norm{ \nabla_{ij} f(x+s_{ij}) - \nabla_{ij} f(x) }^*_{\alpha}
\norm{s_{ij}}_{\alpha},
\end{split}
\end{equation*}
for all $x \in \rset^n$ and $s_{ij} \in \rset^{n_i+n_j}$, which  proves the statement of this lemma. \qed
\end{proof}

\noindent It is straightforward to see that we can obtain from Lemma
\ref{lemmalij} the following inequality (see also \cite{Nes:04}):
\begin{equation}\label{lips2}
 f(x+s_{ij}) \le f(x) + \langle \nabla_{ij} f(x), s_{ij}\rangle + \frac{L_{ij}^{\alpha}}{2}\norm{s_{ij}}^2_ {\alpha},
\end{equation}
for all $x \in \rset^n, s_{ij} \in \rset^{n_i + n_j}$ and $\alpha
\in [0, \ 1]$.


\section{Random coordinate descent algorithm}\label{RCD}

In this section we introduce a variant of Random Coordinate Descent
(RCD) method for solving problem \eqref{model}  that performs a
minimization step with respect to two block variables at each
iteration. The coupling constraint (that is, the weighted sum
constraint $a^T x=0$) prevents the development of an algorithm that
performs a minimization with respect to only one variable at each
iteration. We will therefore be interested in the restriction of the
objective function $f$ on feasible directions consisting of at least
two nonzero (block) components.

Let $(i,j)$ be a two dimensional random variable, where $i, j \in
\{1, \dots, N \}$ with $ i \neq j$ and
$p_{i_kj_k}=\text{Pr}((i,j)=(i_k,j_k))$ be its probability
distribution. Given a feasible $x$, two blocks are chosen randomly
with respect to a probability distribution $p_{ij}$ and a quadratic
model derived from the composite objective function is minimized
with respect to these coordinates. Our method has the following
iteration: given a feasible initial point $x^0$, that is $a^Tx^0=0$,
then for all $k \geq 0$
 \begin{equation*}
\boxed{
\begin{split}
& \textbf{Algorithm 1 (RCD)}\\
&1.\ \text{Choose randomly two coordinates}\ (i_k,j_k) \ \text{with probability}\
 p_{i_kj_k}\\
 &2.\ \text{Set} \ x^{k+1} = x^k + U_{i_k} d_{i_k}+U_{j_k} d_{j_k},
 \end{split}}
 \end{equation*}
where the directions $d_{i_k}$ and $d_{j_k}$ are chosen as follows:
if we use for simplicity the notation $(i,j)$ instead of
$({i_k},{j_k})$, the direction $d_{ij} = [d_i^T \; d_j^T]^T$ is
given by
\begin{equation}\label{extdir}
\begin{split}
d_{ij}\!=\!\arg\!\!\min_{s_{ij}\in
\rset^{n_{i}\!+\!n_{j}}}\!&f(x^k)\!+\!\langle\nabla_{ij}f(x^k),s_{ij}\rangle\!+
\!\frac{L_{ij}^{\alpha}}{2}\norm{s_{ij}}^2_{\alpha}\!+\!
h(x^k+s_{ij})\\
\text{s.t.:} \;\;\  & a_i^Ts_i+a_j^Ts_j=0,\\
\end{split}
\end{equation}
where $a_i\in \rset^{n_i}$ and $a_j \in \rset^{n_j}$ are the $i$th
and $j$th blocks of vector $a$, respectively. Clearly, in our
algorithm we maintain feasibility at each iteration, i.e. $a^T x^k =
0$ for all $k \geq 0$.

\begin{remark}
\label{rem1} \hspace{1cm}
\begin{itemize}
\item[(i)] Note that for the scalar case (i.e., $N=n$) and $h$ given by
\eqref{norm1} or \eqref{indicator}, the direction $d_{ij}$ in
\eqref{extdir} can be computed in closed form. For the block case
(i.e., $n_i > 1$ for all $i$) and if $h$ is  a coordinatewise
separable, strictly convex and piece-wise linear/quadratic function
with $\mathcal{O}(1)$ pieces (e.g., $h$ given by \eqref{norm1}),
there are algorithms for solving the above subproblem in linear-time
(i.e., $\mathcal{O}(n_i+n_j)$ operations) \cite{TseYun:09}. Also for $h$
given by \eqref{indicator}, there exist in the literature algorithms
for solving the subproblem \eqref{extdir} with overall complexity
$\mathcal{O}(n_i+n_j)$  \cite{BerKov:93,Kiw:07}.

\item[(ii)] In algorithm (RCD) we consider $(i, j)=(j,  i)$ and $i \neq j$. Moreover,
we know that the complexity of choosing   randomly a pair $(i,j)$
with a uniform probability distribution requires $\mathcal{O}(1)$
operations. \qed
\end{itemize}
\end{remark}

\noindent We assume that random variables $(i_k, j_k)_{k \ge 0}$ are
i.i.d. In the sequel, we use notation $\eta^k$ for the entire
history of random pair choices and $\phi^k$ for the expected value
of the objective function w.r.t. $\eta^k$, i.e.:
$$    \eta^k = \left\lbrace (i_0, j_0), \dots, (i_{k-1},j_{k-1}) \right\rbrace \ \text{and} \
\phi^k=E_{\eta^k} \left[ F(x^k) \right].$$


\subsection{Previous work}\label{prevwork}

We briefly review some well-known methods  from the literature for
solving the optimization model \eqref{model}. In
\cite{TseYun:06,TseYun:07,TseYun:09} Tseng studied optimization problems in the
form \eqref{model} and developed a (block) coordinate gradient
descent(CGD) method based on the Gauss-Southwell  choice rule. The
main requirement for the (CGD) iteration is the solution of the
following problem: given a feasible $x$ and a working set of indexes
$\mathcal{J}$, the update direction is defined by
\begin{equation}\label{CGDiter}
 \begin{split}
 d_H(x;\mathcal{J})= \arg\min\limits_{s \in \rset^n} & f(x)+\langle\nabla f(x),s \rangle + \frac{1}{2}\langle Hs,s \rangle + h(x+s)\\
\text{s.t.:} \;\;  & a^Ts=0, \;\; s_j=0 \;\;\; \forall j \notin \mathcal{J},\\
 \end{split}
\end{equation}
where $H \in \rset^{n \times n}$ is a symmetric matrix chosen at the
initial step of the algorithm.
\begin{equation*}
\begin{split}
& \textbf{Algorithm (CGD):}\\
1.&\ \text{Choose a nonempty set of indices}\ \mathcal{J}^k \subset
\{1, \dots, n\} \
 \text{with respect to the} \\
 & \ \text{Gauss-Southwell rule}\\ 
2. &\ \text{Solve} \ \eqref{CGDiter}\ \text{with} \  x=x^k,\  \mathcal{J}=\mathcal{J}^k,\ H=H_k \
\text{to obtain} \ d^k=d_{H_k}(x^k;\mathcal{J}^k) \\
3.&\ \text{Choose stepsize } \alpha^k>0 \ \text{and set} \
x^{k+1}=x^k+ \alpha^k d^k.
\end{split}
\end{equation*}
In \cite{TseYun:09}, the authors proved for the particular case when
function $h$ is piece-wise linear/quadratic with $\mathcal{O}(1)$
pieces that an $\epsilon$-optimal solution is attained in
$\mathcal{O}(\frac{nLR_0^2}{\epsilon})$ iterations, where $R_0$  denotes the
Euclidean  distance from the initial point to some optimal solution.
Also, in \cite{TseYun:09}  the authors derive estimates  of order
$\mathcal{O}(n)$ on the computational  complexity of each iteration
for this choice of $h$.

Furthermore, for a quadratic function $f$  and a  box indicator
function $h$ (e.g., support vector machine (SVM) applications) one
of the first decomposition approaches developed similar to (RCD) is
Sequential Minimal Optimization (SMO) \cite{Pla:99}.  SMO consists
of choosing at each iteration two scalar coordinates with respect to
some heuristic rule based on KKT conditions and solving the small QP
subproblem obtained through the decomposition process. However, the
rate of convergence is not provided for the SMO algorithm. But the
numerical experiments show that the method is very efficient in
practice due to the closed form solution of the QP subproblem. List
and Simon \cite{LisSim:05} proposed a variant of block coordinate
descent method for which  an arithmetic  complexity of order
$\mathcal{O}(\frac{n^2 L R_0^2}{\epsilon})$ is proved on a quadratic
model with a box indicator function and generalized linear
constraints. Later, Hush et al. \cite{HusKel:06} presented a more
practical decomposition method which attains the same complexity as
the previous methods.

A random coordinate descent algorithm for model \eqref{model} with
$a=0$ and $h$ being the indicator function for a Cartesian product
of sets was analyzed by Nesterov in \cite{Nes:10}. The generalization
of this algorithm to composite objective functions has been studied
in \cite{QinSch:10,RicTak:11}. However, none of these papers studied the
application of coordinate descent algorithms to linearly coupled
constrained optimization models. A similar random coordinate descent
algorithm as the (RCD) method described in the present paper, for
optimization problems with smooth objective and linearly coupled
constraints, has been developed and analyzed by Necoara et al. in
\cite{NecNes:12}. We further extend these results to linearly
constrained composite objective function optimization and provide in
the sequel the convergence rate analysis for the previously
presented variant of the (RCD) method (see Algorithm 1 (RCD)).


\section{Convergence results}\label{convresults}

In the following subsections we derive the convergence rate of
Algorithm 1 (RCD) for composite optimization model \eqref{model} in
expectation, probability and for strongly convex functions.


\subsection{Convergence in expectation}\label{convexp}
In this section we study the rate of convergence in expectation
of algorithm (RCD).
We consider uniform probability distribution, i.e.,
the event of choosing a pair $(i,j)$ can occur with probability:
\[ p_{ij}=\frac{2}{N(N-1)}, \]
since we assume that $(i,j)=(j,i)$ and $i \neq j \in \{1, \dots,N\}$
(see Remark \ref{rem1} (ii)). In order to provide the convergence
rate of our algorithm, first we have to define the {\it conformal
realization} of a vector introduced in \cite{Roc:67,Roc:84}.
\begin{definition}
Let $d,d'\in \rset^n$, then the vector $d'$ is {\it conformal} to $d$ if:
\begin{equation*}
  supp(d') \subseteq supp(d) \quad \text{and} \quad d'_j d_j \ge 0 \;\;  \forall j = 1, \dots, n.
\end{equation*}
\end{definition}
For a given matrix $A \in \rset^{m \times n}$, with $m \leq n$, we
introduce the notion of elementary vectors defined as:
\begin{definition}
 An elementary vector of $Null(A)$ is a vector $d \in Null(A)$
for which there is no nonzero vector $d' \in Null(A)$
conformal to $d$ and $supp(d')\neq supp(d)$.
\end{definition}
Based on Exercise 10.6 in \cite{Roc:84} we state the following lemma:
\begin{lemma}\label{lemmaconf}\cite{Roc:84}
Given $d\in Null(A)$, if $d$ is an elementary vector, then
$\abs{supp(d)}\le rank(A)+1 \le m+1$. Otherwise,
$d$ has a {\it conformal realization}:
\begin{equation*}
d= d^1 + \dots + d^s,
\end{equation*}
where $s\ge 1$ and $d^t  \in \text{Null}(A)$ are elementary vectors
conformal to $d$ for all $t=1,\dots, s$.
\end{lemma}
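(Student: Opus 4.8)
The plan is to treat the two assertions separately: first the dimension bound $|supp(d)|\le rank(A)+1$ for an elementary $d$, and then the existence of a conformal realization for a non-elementary $d$, which I would obtain by induction on the cardinality of the support. Throughout, the workhorse is the observation that conformality is transitive: if $d''$ is conformal to $d'$ and $d'$ is conformal to $d$, then $d''$ is conformal to $d$. This holds because $supp(d'')\subseteq supp(d')\subseteq supp(d)$, and on $supp(d'')$ each of $d'_j$ and $d_j$ carries the same sign as $d''_j$, so $d''_j d_j\ge 0$.

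For the support bound I would argue by contradiction. Suppose $d\in Null(A)$ is elementary with $S=supp(d)$ and $|S|\ge rank(A)+2$. Let $A_S$ be the submatrix of the columns of $A$ indexed by $S$; then $rank(A_S)\le rank(A)$, so the space $V=\{x\in\rset^n:\; supp(x)\subseteq S,\; Ax=0\}$ has $\dim V=|S|-rank(A_S)\ge 2$. Since $d\in V$, there is a $w\in V$ not proportional to $d$. Because $d_j\neq 0$ for every $j\in S$, for small $|t|$ the vector $d-tw$ keeps the sign of $d$ on each coordinate of $S$; increasing $|t|$ from $0$ to the first value $t^*$ at which some $j\in S$ satisfies $d_j-t^* w_j=0$ produces $d'=d-t^* w$, a nonzero element of $Null(A)$ that is conformal to $d$ with $supp(d')\subsetneq S$. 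It is nonzero because $w$ is not proportional to $d$. This contradicts elementarity, so $|S|\le rank(A)+1\le m+1$.

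For the conformal realization I proceed by induction on $|supp(d)|$. If $d$ is elementary we are done with $s=1$. Otherwise, among all nonzero vectors of $Null(A)$ conformal to $d$ (a nonempty, finitely-supported family), choose one, $\bar d$, whose support is minimal under inclusion. By transitivity of conformality $\bar d$ is itself elementary: a strictly smaller nonzero vector conformal to $\bar d$ would be conformal to $d$ and contradict minimality. Set $\lambda=\min_{j\in supp(\bar d)} d_j/\bar d_j$, which is positive since $d_j$ and $\bar d_j$ share the same sign on $supp(\bar d)$, and put $d^1=\lambda\bar d$. A coordinatewise check shows that $d-d^1$ lies in $Null(A)$, is conformal to $d$, and has support strictly smaller than $supp(d)$: on $supp(d)\setminus supp(\bar d)$ it equals $d$, on $supp(\bar d)$ it retains the sign of $d$ by the choice of $\lambda$, and at the minimizing index it vanishes. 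Applying the induction hypothesis to $d-d^1$ yields elementary vectors $d^2+\dots+d^s$, each conformal to $d-d^1$ and hence, again by transitivity, conformal to $d$; together with $d^1$ this gives the asserted realization.

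The delicate point in both halves is the bookkeeping of signs and supports under the two sliding operations, namely the line search $d-tw$ in the bound and the min-ratio scaling by $\lambda$ in the decomposition: one must verify that the chosen parameter simultaneously annihilates at least one coordinate and preserves conformality, so that the support strictly decreases at each step and the induction terminates. Transitivity of conformality is the feature that guarantees the pieces peeled off remain elementary and conformal to the original $d$, and it is what makes both parts go through cleanly.
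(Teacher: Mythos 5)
Your proof is correct, but note that the paper does not actually prove this lemma: it is stated as a quotation of Exercise 10.6 in Rockafellar's \emph{Network Flows and Monotropic Optimization} and used as a black box. What you have written is a complete, self-contained proof along the standard lines of Rockafellar's argument, and both halves check out. In the support bound, the line search $d-tw$ is legitimate because $supp(w)\subseteq S$ and $w\neq 0$ guarantee some $j\in S$ with $w_j\neq 0$, so a first annihilating parameter $t^*$ exists (in at least one of the two directions of $t$), and $d-t^*w\neq 0$ precisely because $w$ is not proportional to $d$; the resulting vector violates the definition of elementarity since its support is a proper subset of $supp(d)$. In the decomposition, the minimal-support conformal vector $\bar d$ exists because supports form a finite family ordered by inclusion, your transitivity observation correctly shows $\bar d$ is elementary, and the min-ratio choice of $\lambda$ both preserves signs and kills at least one coordinate, so the induction on $|supp(d)|$ terminates; the degenerate case $d-d^1=0$ cannot occur for non-elementary $d$ since $supp(\bar d)\subsetneq supp(d)$ leaves a coordinate of $d$ untouched. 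The only cosmetic caveat is that the paper formally defines $supp(x)$ as the \emph{number} of nonzero coordinates, while both the lemma statement and your proof use it as the \emph{set} of nonzero coordinates; this is an inconsistency in the paper, not in your argument.
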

For the scalar case, i.e., $N=n$ and $m=1$, the method provided in
\cite{TseYun:09} finds a conformal realization with dimension $s \le
|\text{supp}(d)|-1$ within $\mathcal{O}(n)$ operations. We observe
that elementary vectors $d^t$ in Lemma \ref{lemmaconf} for the case
$m=1$ (i.e., $A=a^T$) have at most $2$ nonzero components.

\noindent Our convergence analysis is based on the following lemma,
whose proof can be found in \cite[Lemma 6.1]{TseYun:09}:
\begin{lemma}\label{lemma6.1}\cite{TseYun:09}
Let $h$ be coordinatewise separable and convex. For any $x, x+d \in
\text{dom}\  h$, let $d$ be expressed as $d=d^1+\dots+d^s$  for some
$s \ge 1$ and some nonzero $d^t \in \rset^n$ conformal to $d$ for
$t=1, \dots, s$. Then, we have:
\begin{equation*}
 h(x+d) - h(x) \ge \sum\limits_{t=1}^s \left( h(x+d^t)- h(x)\right).
\end{equation*}
\end{lemma}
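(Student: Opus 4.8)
The plan is to exploit coordinatewise separability to reduce the claim to a one-dimensional statement about a single convex function, and then to use the conformal structure of the decomposition to realize each partial increment as a point on the segment between $x$ and $x+d$. Since $h$ is coordinatewise separable and convex, I would first write $h(y) = \sum_{l=1}^n h_l(y_l)$ with each scalar function $h_l : \rset \to \rset \cup \{+\infty\}$ convex. Both sides of the asserted inequality then split as sums over $l$, so it suffices to prove, for every coordinate $l$, that
\begin{equation*}
h_l(x_l + d_l) - h_l(x_l) \ge \sum_{t=1}^s \left( h_l(x_l + d_l^t) - h_l(x_l) \right).
\end{equation*}
The hypothesis $x, x+d \in \text{dom}\ h$ guarantees that $h_l(x_l)$ and $h_l(x_l + d_l)$ are finite for each $l$, and convexity of the domain will ensure the intermediate values are finite as well, so all quantities are well defined even when $h$ is extended-real-valued (as for the box indicator \eqref{indicator}).

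Fix a coordinate $l$. The key observation is that conformality forbids sign cancellation: since every $d^t$ is conformal to $d$ we have $d_l^t d_l \ge 0$, so all the scalars $d_l^1, \dots, d_l^s$ share the sign of $d_l$. Combined with $d_l = \sum_{t=1}^s d_l^t$, this forces $\abs{d_l^t} \le \abs{d_l}$ for each $t$. Hence, when $d_l \neq 0$, the weights $\lambda_t := d_l^t / d_l$ satisfy $\lambda_t \in [0,1]$ and $\sum_{t=1}^s \lambda_t = 1$, and each point $x_l + d_l^t$ can be written as the convex combination $(1-\lambda_t) x_l + \lambda_t (x_l + d_l)$.

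With this realization in hand, convexity of $h_l$ gives $h_l(x_l + d_l^t) \le (1-\lambda_t) h_l(x_l) + \lambda_t h_l(x_l + d_l)$, that is $h_l(x_l + d_l^t) - h_l(x_l) \le \lambda_t \left( h_l(x_l + d_l) - h_l(x_l) \right)$. Summing over $t$ and using $\sum_t \lambda_t = 1$ yields exactly the coordinatewise inequality above; the degenerate case $d_l = 0$ is trivial because conformality then forces every $d_l^t = 0$ and both sides vanish. Summing the coordinatewise inequalities over $l$ recovers the statement of the lemma.

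As for the main obstacle: the only genuinely delicate point is verifying that the $\lambda_t$ form a legitimate convex combination — nonnegative, bounded by one, and summing to one. This is precisely where conformality is indispensable: without the sign condition $d_l^t d_l \ge 0$ the partial increments could overshoot or cancel, some $\lambda_t$ could fall outside $[0,1]$, and the convexity bound would fail. Everything else is a routine, coordinate-by-coordinate application of convexity.
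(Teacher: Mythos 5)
Your proof is correct. Note that the paper itself offers no argument for this lemma: it simply cites Lemma~6.1 of Tseng and Yun and refers the reader there, so there is no in-paper proof to compare against. Your reduction to scalar coordinates via separability, followed by the observation that conformality makes each $\lambda_t = d_l^t/d_l$ a weight in a convex combination with $\sum_t \lambda_t = 1$, is a complete and valid self-contained argument; you also correctly handle the degenerate coordinate $d_l=0$ (via the support condition $supp(d^t)\subseteq supp(d)$) and the extended-real-valued case. For what it is worth, the argument in Tseng and Yun's original paper proceeds slightly differently: after the same one-dimensional reduction it telescopes $h_l(x_l+d_l)-h_l(x_l)$ as $\sum_t \bigl[h_l(x_l+\sum_{r\le t} d_l^r)-h_l(x_l+\sum_{r<t} d_l^r)\bigr]$ and bounds each increment from below by $h_l(x_l+d_l^t)-h_l(x_l)$ using the monotonicity of increments of a convex function; your convex-combination route reaches the same inequality in one step and is arguably cleaner, while the telescoping version makes the role of the common sign of the $d_l^t$ equally transparent. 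Either way, the lemma is established.
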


\noindent For the simplicity of the analysis we introduce the
following linear subspaces: \[ S_{ij}=\left\lbrace d\in
\rset^{n_i+n_j}: \;\ a_{ij}^Td=0 \right\rbrace \quad  \text{and}
\quad S=\left\lbrace d\in \rset^{n}: \;\  a^Td=0 \right\rbrace.\]

\noindent A simplified update rule of algorithm (RCD) is expressed
as: \[ x^+=x+U_id_i+U_jd_j. \] We denote by $F^*$ and $X^*$ the
optimal value and the optimal solution set for problem
\eqref{model}, respectively. We also introduce the maximal residual
defined in terms of the norm $\| \cdot\|_\alpha$:
\begin{equation*}
R_{\alpha}=\max\limits_x  \left\{ \max\limits_{x^* \in X^*} \norm{x
- x^*}_{\alpha}: \;\; F(x)\le F(x^0) \right\},
\end{equation*}
which measures the size of the level set of $F$ given by $x^0$. We
assume that this distance is finite for the initial iterate $x^0$.

\noindent Now, we prove the main result of this section:
\begin{theorem}
\label{convergence1}
 Let $F$ satisfy Assumption \ref{assumpbeg1}.
Then, the random coordinate descent algorithm (RCD) based on the
uniform distribution generates a sequence $x^k$ satisfying the
following convergence rate for the expected values of the objective
function:
\begin{equation*}
   \phi^k - F^* \le \frac{N^2 L^{1-\alpha}R_{\alpha}^2}{k+\frac{N^2 L^{1-\alpha}
   R_{\alpha}^2}{F(x^0)-F^*}}.
\end{equation*}
\end{theorem}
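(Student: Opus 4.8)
The plan is to follow the standard template for randomized coordinate descent: establish a one-step decrease in expectation, bound that decrease below by a multiple of the optimality gap using the conformal realization of $x^*-x^k$, and then solve the resulting scalar recursion. First I would fix the current feasible iterate $x^k$ and condition on the choice of a pair $(i,j)$. Since $d_{ij}$ in \eqref{extdir} is the exact minimizer of the composite quadratic model over the feasible subspace $S_{ij}$, the descent inequality \eqref{lips2} applied to the update $x^{k+1}=x^k+U_id_i+U_jd_j$ gives $F(x^{k+1})\le F(x^k)+\min_{s\in S_{ij}}\left[\langle\nabla_{ij}f(x^k),s\rangle+\frac{L_{ij}^{\alpha}}{2}\norm{s}_{\alpha}^2+h(x^k+s)-h(x^k)\right]$, and the bracket may be upper bounded by evaluating it at any convenient feasible $s\in S_{ij}$. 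Averaging against the uniform law $p_{ij}=\frac{2}{N(N-1)}$ then yields $E[F(x^{k+1})\mid\eta^k]\le F(x^k)+\frac{2}{N(N-1)}\sum_{i<j}\min_{s\in S_{ij}}[\cdots]$.

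The heart of the argument is to produce, simultaneously for all pairs, a good feasible test direction. I would take $d=x^*-x^k$ with $x^*$ a closest optimal point, so that $d\in S=\text{Null}(a^T)$, and invoke Lemma \ref{lemmaconf} with $A=a^T$ (hence $m=1$) to write $d=\sum_{t=1}^s d^t$ with each elementary vector $d^t$ conformal to $d$ and supported on at most two blocks; thus each $d^t$, and every scalar multiple of it, is feasible for the subproblem of its own pair. Plugging the scaled vector $\beta d^t$ with $\beta\in[0,1]$ into the corresponding per-pair minimum and summing over $t$, I would control the three pieces separately: the linear part collapses to $\beta\langle\nabla f(x^k),d\rangle\le\beta(f(x^*)-f(x^k))$ by convexity of $f$; the nonsmooth part is handled by convexity of $h$ together with the conformal superadditivity of Lemma \ref{lemma6.1}, giving $\sum_t[h(x^k+\beta d^t)-h(x^k)]\le\beta(h(x^*)-h(x^k))$; and the quadratic part is bounded using $L_{ij}^{\alpha}\le 2L^{1-\alpha}$ and the fact that conformality precludes sign cancellation, so that $\sum_t\norm{d^t}_{\alpha}^2\le\norm{d}_{\alpha}^2\le R_{\alpha}^2$ (the iterates stay in the level set by monotonicity, so $\norm{x^*-x^k}_{\alpha}\le R_{\alpha}$). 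Collecting these gives, for every $\beta\in[0,1]$, the key inequality $E[F(x^{k+1})\mid\eta^k]-F(x^k)\le\frac{2}{N(N-1)}\left[\beta(F^*-F(x^k))+\beta^2 L^{1-\alpha}R_{\alpha}^2\right]$.

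Finally I would take total expectation, set $\Delta^k=\phi^k-F^*$, and minimize the right-hand side over $\beta\in[0,1]$ (the unconstrained optimum $\beta^\star\sim\Delta^k/(2L^{1-\alpha}R_{\alpha}^2)$ being admissible because $\Delta^k$ is nonincreasing). This converts the per-step estimate into a quadratic recursion $\Delta^{k+1}\le\Delta^k-c(\Delta^k)^2$ with $c$ of order $1/(N^2L^{1-\alpha}R_{\alpha}^2)$. Dividing by $\Delta^k\Delta^{k+1}$ and using $\Delta^{k+1}\le\Delta^k$ gives $1/\Delta^{k+1}\ge 1/\Delta^k+c$, and telescoping yields $\Delta^k\le 1/(1/\Delta^0+ck)$, which is exactly the claimed bound after substituting $\Delta^0=F(x^0)-F^*$ and $c=1/(N^2L^{1-\alpha}R_{\alpha}^2)$.

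The main obstacle I anticipate is the conformal step: one must match the pieces $d^t$ to the pairwise minima so that no pair is charged more than once (elementary vectors sharing a two-block support have to be grouped before being inserted), and one must track the constant in the quadratic term sharply. A crude per-pair estimate using $L_{ij}^{\alpha}\le 2L^{1-\alpha}$ together with $\sum_t\norm{d^t}_{\alpha}^2\le\norm{d}_{\alpha}^2$ only produces a constant of order $2N(N-1)$, so recovering the clean $N^2$ factor hinges on exploiting the conformal structure of the decomposition in the quadratic bound rather than bounding term by term; this is the delicate accounting that controls the final rate.
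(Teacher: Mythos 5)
Your proposal is correct and follows essentially the same route as the paper's proof: the per-pair surrogate bound from \eqref{lips2}, uniform averaging, the conformal realization of a feasible direction into two-block elementary vectors (Lemma \ref{lemmaconf}), the superadditivity of Lemma \ref{lemma6.1} applied to both $h$ and $\norm{\cdot}_\alpha^2$ together with $L_{ij}^\alpha\le 2L^{1-\alpha}$, and the quadratic recursion in $\Delta^k$. The only cosmetic difference is that you plug the scaled pieces of $x^*-x^k$ directly into the per-pair minima, whereas the paper first passes through the full-subspace minimizer $\tilde d$ before restricting to the segment $[x^k,x^*]$ --- the resulting inequality is identical, and the grouping/accounting issue you flag is exactly what the paper resolves by assigning one (possibly zero) conformal vector $s_{ij}^d$ to each pair.
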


\begin{proof}
For simplicity, we drop the index $k$ and use instead of $(i_k,
j_k)$ and $x^k$ the notation $(i,j)$ and $x$, respectively. Based on
\eqref{lips2} we derive:
\begin{equation*}
 \begin{split}
  F(x^+)& \le f(x) + \langle \nabla_{ij} f(x), d_{ij}\rangle + \frac{L_{ij}^{\alpha}}{2}\norm{d_{ij}}^2_{\alpha} + h(x+d_{ij})\\
  & \overset{\eqref{extdir}}{=} \min_{s_{ij}\in S_{ij}} f(x) + \langle \nabla_{ij} f(x), s_{ij}\rangle + \frac{L_{ij}^{\alpha}}{2}\norm{s_{ij}}^2_{\alpha} + h(x+s_{ij}).
\end{split}
\end{equation*}
Taking expectation in both sides w.r.t. random variable $(i,j)$ and recalling that $p_{ij}=\frac{2}{N(N-1)}$, we get:
\begin{equation} \label{sijineq}
\begin{split}
E_{ij}\left[ F(x^+) \right]&\le E_{ij}\left[\! \min_{s_{ij}\in S_{ij}} \! f(x)+\!\langle \nabla_{ij}f(x),s_{ij} \rangle+
\frac{L_{ij}^{\alpha}}{2} \norm{s_{ij}}^2_{\alpha}+ h(x+s_{ij}) \right]\\
&\le E_{ij}\left[ f(x)+ \langle \nabla_{ij}f(x),s_{ij} \rangle+
\frac{L_{ij}^{\alpha}}{2} \norm{s_{ij}}^2_{\alpha}+ h(x+s_{ij}) \right]\\
&=\!\frac{2}{N(N\!-\!1)}\!\sum\limits_{i,j}\!\left(\! f(x)\!+\!\langle
\nabla_{ij} f(x),s_{ij} \rangle\!+\!
\frac{L_{ij}^{\alpha}}{2}\! \norm{s_{ij}}_{\alpha}^2 \!+\! h(x+s_{ij})\! \right)\\
&=\!f(x)\!+\! \frac{2}{N\!(N\!\!-\!\!1)}\!\left(\!\langle\nabla
f(x),\!\sum_{i,j}\!
s_{ij}\rangle\!+\!\sum_{i,j}\!\frac{L_{ij}^{\alpha}}{2}\!\norm{s_{ij}}^2_{\alpha}
\!+\!\sum_{i,j}\!h(x\!\!+\!\!s_{ij})\!\right)\!,
\end{split}
\end{equation}
for all possible $s_{ij} \in S_{ij}$ and pairs $(i,j)$ with $ i \neq
j \in \{1, \dots, N\}$.

\noindent Based on Lemma \ref{lemmaconf} for $m=1$, it follows that
any $d \in S$ has a conformal realization defined by
$d=\sum\limits_{t=1}^{s} d^t$, where the vectors $d^t \in S$ are
conformal to $d$ and have only two nonzero components. Thus, for any
$t=1, \dots, s$ there is a pair $(i,j)$ such that $d^t \in S_{ij}$.
Therefore, for any $d \in S$ we can choose an appropriate set of
pairs $(i,j)$ and vectors $s_{ij}^d \in S_{ij}$ conformal to $d$
such that $d=\sum\limits_{i,j} s_{ij}^d$. As we have seen, the above
chain of relations in \eqref{sijineq} holds for any set of pairs
$(i,j)$ and vectors $s_{ij} \in S_{ij}$.  Therefore, it also holds
for the set of pairs $(i,j)$ and vectors $s_{ij}^d$ such that $d=
\sum\limits_{i,j}s_{ij}^d$. In conclusion, we have from
\eqref{sijineq} that:
\begin{equation*}
 E_{ij} \left[ F(x^+) \right]\le f(x) +\frac{2}{N(N-1)}\left(\!\langle \nabla f(x), \sum_{i,j}
s_{ij}^d\rangle\!+\!\sum_{i,j}\frac{L_{ij}^{\alpha}}{2}\norm{s_{ij}^d}^2_{\alpha}
+\sum_{i,j}\!h(x+s_{ij}^d)\right)\!,
\end{equation*}
for all $d \in S$. Moreover, observing that $L_{ij}^{\alpha} \le
2L^{1-\alpha}$ and   applying  Lemma \ref{lemma6.1} in the previous
inequality for coordinatewise separable functions
$\norm{\cdot}^2_{\alpha}$ and $h(\cdot)$,  we obtain:
\begin{equation}\label{eqconv}
\begin{split}
\!E_{ij}\!\left[\!F(x^+)\!\right]\!\le\!f(x&)\!+\!\frac{2}{N(N\!\!-\!\!1)}\!(\langle\nabla f(x),\sum_{i,j}
s_{ij}^d\rangle\!+\!\sum_{i,j}\frac{L_{ij}^{\alpha}}{2} \lVert s_{ij}^d\rVert^2_{\alpha}
\!\!+\!\!\sum_{i,j}\!h(x\!+\!s_{ij}^d))\\
\overset{\text{Lemma} \ 3}{\le}& f(x) + \frac{2}{N(N-1)} (\langle \nabla f(x),
 \sum_{i,j}s_{ij}^d\rangle + L^{1-\alpha} \lVert\sum_{i,j}s_{ij}^d\rVert^2_{\alpha} + \\
& \qquad \qquad \qquad \qquad \ \ \ \ h(x+\sum_{i,j}s_{ij}^d)\!+\!(\frac{N(N-1)}{2}-1)h(x))\\
 \overset{d= \sum\limits_{i,j} s_{ij}^d}{=}& (1-\frac{2}{N(N-1)})F(x)+ \frac{2}{N(N-1)} (f(x)+\langle \nabla f(x),d\rangle+\\
& \qquad \qquad \qquad\qquad \qquad \qquad \qquad \qquad
L^{1-\alpha}\norm{d}^2_{\alpha}\!+\!h(x+d)),
\end{split}
\end{equation}
for any $d \in S$. Note that \eqref{eqconv} holds for every $d \in
S$ since \eqref{sijineq} holds for any $s_{ij} \in S_{ij}$.
Therefore, as \eqref{eqconv} holds for every vector from the
subspace $S$, it also  holds  for the following particular vector
$\tilde{d} \in S$ defined as:
\begin{equation*}
 \tilde{d}= \arg\min_{s \in S} f(x)+\langle \nabla f(x),s \rangle +L^{1-\alpha}
\norm{s}^2_{\alpha} +h(x+s).
\end{equation*}
Based on this choice and using similar reasoning as in
\cite{Nes:07,RicTak:11} for proving the convergence rate of gradient type
methods for composite objective functions, we derive the following:
\begin{align*}
& f(x)+ \langle \nabla f(x), \tilde{d}\rangle + L^{1-\alpha}\norm{\tilde{d}}^2_{\alpha}+h(x+\tilde{d})\\
& =\min_{y \in S} f(x)+\langle \nabla f(x),y-x \rangle + L^{1-\alpha}\norm{y-x}^2_{\alpha}+h(y)\\
& \le \min_{y \in S} F(y) + L^{1-\alpha} \norm{y-x}^2_{\alpha} \\
& \le \min_{\beta \in [0,1]} F(\beta x^* + (1-\beta)x) + \beta^2
L^{1-\alpha} \norm{x-x^*}^2_{\alpha}\\
& \le \min_{\beta \in [0,1]} F(x) -\beta(F(x)-F^*) + \beta^2 L^{1-\alpha} R_{\alpha}^2\\
& = F(x) - \frac{(F(x)-F^*)^2}{ L^{1-\alpha}R_{\alpha}^2},
\end{align*}
where in the first inequality we used the convexity of $f$ while in
the second and third inequalities we used basic optimization
arguments. Therefore, at each iteration $k$ the following inequality
holds:
\begin{equation*}
\begin{split}
 E_{i_kj_k}\left[ F(x^{k+1}) \right] \le & (1-\frac{2}{N(N-1)})F(x^k)+\\
& \;\; \frac{2}{N(N-1)}\left[F(x^k) - \frac{(F(x^k) -F^*)^2}{L^{1-\alpha}R_{\alpha}^2}\right].\\
\end{split}
\end{equation*}
Taking expectation with respect to $\eta_{k}$ and using convexity properties we get:
\begin{equation}\label{probrel1}
\begin{split}
\phi^{k+1} - F^* \le &(1-\frac{2}{N(N-1)})(\phi^k-F^*)+\\
& \quad \frac{2}{N(N-1)}\left[(\phi^k-F^*) - \frac{(\phi^k -F^*)^2}{L^{1-\alpha}R_{\alpha}^2}\right]\\
\le & (\phi^k-F^*)-\frac{2}{N(N-1)}\left[\frac{(\phi^k -F^*)^2}{L^{1-\alpha}R_{\alpha}^2}\right].\\
\end{split}
\end{equation}
Further, if we denote $\Delta^k=\phi^k-F^*$ and $\gamma=N(N-1)L^{1-\alpha}R^2_{\alpha}$ we get:
\begin{equation*}
 \Delta^{k+1} \le \Delta^k - \frac{(\Delta^k)^2}{\gamma}.
\end{equation*}
Dividing both sides with $\Delta^k\Delta^{k+1} > 0$ and using the fact that $\Delta^{k+1} \le \Delta^k$ we get:
\begin{equation*}
\frac{1}{\Delta^{k+1}} \ge \frac{1}{\Delta^k} + \frac{1}{\gamma}
\;\;\;  \forall k\ge 0.
\end{equation*}
Finally, summing up from $0, \dots, k$ we easily get the above
convergence rate. \qed
\end{proof}

Let us analyze the convergence rate of our method for the two most
common cases of the extended norm introduced in this section:  w.r.t. extended Euclidean norm
$\norm{\cdot}_0$ (i.e., $\alpha=0$) and norm  $\norm{\cdot}_1$ (i.e.,
$\alpha=1$). Recall that the norm $\norm{\cdot}_1$ is defined by:
\begin{equation*}
 \norm{x}_1^2=\sum\limits_{i=1}^N L_i \norm{x_i}^2.
\end{equation*}

\begin{corrollary}
Under the same assumptions of Theorem \ref{convergence1}, the
algorithm (RCD) generates  a sequence $x^k$ such that the expected
values of the objective function satisfy the following convergence
rates for $\alpha=0$ and $\alpha = 1$:
\begin{equation*}
\begin{split}
&{ \alpha=0:} \hspace{20pt} \phi^k - F^* \le \frac{N^2 L R_0^2}{k+ \frac{N^2 L R_0^2}{F(x^0) - F^*}},\\
&{ \alpha=1:} \hspace{20pt} \phi^k - F^* \le \frac{N^2 R_1^2}{k+ \frac{N^2 R_1^2}{F(x^0) - F^*}}.
\end{split}
\end{equation*}
\end{corrollary}

\begin{remark}
We usually have  $R_1^2 \le L R_0^2$ and this shows the advantages
that the general  norm $\norm{\cdot}_{\alpha}$ has over the
Euclidean norm. Indeed,  if  we denote by $r_i^2 =\max_{x}
\{\max_{x^* \in X^*} \norm{x_i - x^*_i}^2: \; F(x) \le F(x^0) \}$,
then we can provide  upper bounds on $R_1^2 \leq \sum_{i=1}^N L_i
r_i^2$ and $R_0^2 \leq \sum_{i=1}^N r_i^2$. Clearly, the following
inequality is valid:
\begin{equation*}
 \sum\limits_{i=1}^N L_ir_i^2 \le \sum\limits_{i=1}^N L r_i^2,
\end{equation*}
 and the inequality holds with  equality only for $L_i=L$  for all $i= 1, \dots, N$. We recall that $L=
\max_i L_i$.  Therefore, in the majority of cases the estimate for
the rate of convergence based on  norm $\norm{\cdot}_1$ is much
better than that based on the Euclidean  norm $\norm{\cdot}_0$.
\end{remark}


\subsection{Convergence for strongly convex functions}

Now, we assume that the objective function in \eqref{model} is $\sigma_{\alpha}$-strongly convex with
respect to norm $\norm{\cdot}_{\alpha}$, i.e.:

\begin{equation}\label{strong}
F(x) \ge F(y) + \langle F'(y), x-y \rangle +
\frac{\sigma_{\alpha}}{2}\norm{x-y}_{\alpha}^2 \quad \forall x, y \in \rset^n,
\end{equation}
where $F'(y)$ denotes some subgradient of $F$ at $y$. Note that if
the function $f$ is $\sigma$-strongly convex w.r.t. extended
Euclidean norm, then  we can remark that it is also
$\sigma_{\alpha}$-strongly convex function w.r.t. norm
$\norm{\cdot}_{\alpha}$ and the following relation between the
strong convexity constants holds:
\begin{equation*}
\begin{split}
\frac{\sigma}{L^{\alpha}} \sum\limits_{i=1}^N L^{\alpha}\norm{x_i-y_i}^2  &\ge
\frac{\sigma}{L^{\alpha}} \sum\limits_{i=1}^N L_i^{\alpha} \norm{x_i-y_i}^2\\
&\ge \sigma_{\alpha} \norm{x-y}^2_{\alpha},
\end{split}
\end{equation*}
which leads to
\begin{equation*}
 \sigma_{\alpha} \le \frac{\sigma}{L^{\alpha}}.
\end{equation*}

\noindent Taking $y=x^*$ in \eqref{strong} and from optimality
conditions $\langle F'(x^*), x-x^* \rangle \ge 0$ for all $x \in S$
we obtain:
\begin{equation}\label{strong2}
 F(x) - F^* \ge \frac{\sigma_{\alpha}}{2}\norm{x-x^*}_{\alpha}^2.
\end{equation}

\noindent Next, we state the convergence result of our algorithm
(RCD) for solving the problem \eqref{model} with
$\sigma_{\alpha}$-strongly convex objective w.r.t.  norm
$\norm{\cdot}_{\alpha}$.
\begin{theorem}
Under the assumptions of Theorem \ref{convergence1}, let $F$ be also
$\sigma_{\alpha}$-strongly convex w.r.t. $\norm{\cdot}_{\alpha}$. For the
sequence $x^k$ generated by algorithm (RCD) we have the following rate
of convergence of the expected values of the objective function:
\begin{equation*}
   \phi^k - F^* \le \left(1-\frac{2(1-\gamma)}{N^2} \right)^k (F(x^0)-F^*),
\end{equation*}
where $\gamma$ is defined by:
\begin{equation*}
\gamma =
\begin{cases}
    1-\frac{\sigma_{\alpha}}{8 L^{1-\alpha}}, & if \ \sigma_{\alpha} \le 4 L^{1-\alpha}\\
    \frac{2L^{1-\alpha}}{\sigma_{\alpha}} , & \ otherwise.
\end{cases}
\end{equation*}
\end{theorem}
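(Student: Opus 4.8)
The plan is to recycle the per-iteration estimate from the proof of Theorem \ref{convergence1} essentially verbatim, up to the exact point where the level-set radius $R_\alpha$ is invoked, and to swap in the strong convexity bound \eqref{strong2} there. Concretely, I would keep the decomposition obtained from \eqref{eqconv} evaluated at the special minimizer $\tilde d$, namely
\[E_{ij}[F(x^+)] \le \Bigl(1 - \tfrac{2}{N(N-1)}\Bigr)F(x) + \tfrac{2}{N(N-1)}\Bigl[f(x) + \langle\nabla f(x), \tilde d\rangle + L^{1-\alpha}\norm{\tilde d}_\alpha^2 + h(x+\tilde d)\Bigr],\]
and then reproduce the same chain of inequalities: convexity of $f$, restriction of the inner minimization to the segment $\{\beta x^* + (1-\beta)x : \beta \in [0,1]\}$, and convexity of $F$. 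This bounds the bracket by
\[\min_{\beta \in [0,1]} \Bigl[F(x) - \beta(F(x) - F^*) + \beta^2 L^{1-\alpha}\norm{x - x^*}_\alpha^2\Bigr].\]

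The single new ingredient is the control of $\norm{x - x^*}_\alpha^2$. Where Theorem \ref{convergence1} bounds it by the constant $R_\alpha^2$, here I would invoke the strong convexity consequence \eqref{strong2} to write $\norm{x - x^*}_\alpha^2 \le \tfrac{2}{\sigma_\alpha}(F(x) - F^*)$. Setting $\Delta = F(x) - F^*$, the bracket becomes at most $F^* + \Delta\,\bigl[1 + \min_{\beta\in[0,1]}(-\beta + \tfrac{2L^{1-\alpha}}{\sigma_\alpha}\beta^2)\bigr]$, so the whole estimate collapses to minimizing the scalar quadratic $q(\beta) = -\beta + \tfrac{2L^{1-\alpha}}{\sigma_\alpha}\beta^2$ over $[0,1]$. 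Its unconstrained minimizer is $\hat\beta = \tfrac{\sigma_\alpha}{4L^{1-\alpha}}$, which lies in $(0,1]$ precisely when $\sigma_\alpha \le 4L^{1-\alpha}$, giving $q(\hat\beta) = -\tfrac{\sigma_\alpha}{8L^{1-\alpha}}$; otherwise $q$ is decreasing on $[0,1]$ and attains its minimum at $\beta = 1$ with value $q(1) = -1 + \tfrac{2L^{1-\alpha}}{\sigma_\alpha}$. These two values are exactly $\gamma - 1$ in the two branches of the definition of $\gamma$, so the bracket is bounded by $F^* + \gamma\Delta$ in both cases.

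Substituting back gives $E_{ij}[F(x^+)] - F^* \le \bigl(1 - \tfrac{2(1-\gamma)}{N(N-1)}\bigr)\Delta$; taking expectation over the history $\eta^k$ produces the one-step contraction $\phi^{k+1} - F^* \le \bigl(1 - \tfrac{2(1-\gamma)}{N(N-1)}\bigr)(\phi^k - F^*)$, and unrolling from $k$ down to $0$ yields the geometric decay. To land on the stated form I would finally verify that $1 - \gamma > 0$ in each regime (so the factor is genuinely in $(0,1)$) and relax $N(N-1) \le N^2$, which only enlarges a positive contraction factor and hence preserves the inequality. The step I expect to demand the most care is the constrained minimization of $q$: one must check that the threshold $\sigma_\alpha = 4L^{1-\alpha}$ separating the interior minimizer from the boundary minimizer is exactly the threshold in the definition of $\gamma$, and that $\hat\beta \le 1$ is the right case distinction. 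Everything else is a transcription of the argument already carried out for Theorem \ref{convergence1}.
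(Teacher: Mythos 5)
Your proposal is correct and follows essentially the same route as the paper: start from \eqref{eqconv} evaluated at the minimizer $\tilde d$, repeat the chain of inequalities from Theorem \ref{convergence1} up to the segment restriction, replace the level-set bound $R_\alpha^2$ by the strong-convexity estimate \eqref{strong2}, and minimize the resulting quadratic in $\beta$ over $[0,1]$, whose two regimes ($\hat\beta=\sigma_\alpha/(4L^{1-\alpha})$ versus $\beta=1$) give exactly the two branches of $\gamma$. The paper carries out precisely this case split via $\beta^*=\min\{1,\sigma_\alpha/(4L^{1-\alpha})\}$ and the same relaxation of $N(N-1)$ to $N^2$, so there is nothing to add.
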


\begin{proof}
Based on relation \eqref{eqconv} it follows that:
\begin{equation*}
\begin{split}
E_{i_k j_k}[F(x^{k+1})] \le & (1-\frac{2}{N(N-1)})F(x^k) + \\
&\frac{2}{N(N-1)} \min_{d\in S}\!\left(\!f(x^k)\!+\!\langle\nabla
f(x^k),d\rangle+
L^{1-\alpha} \norm{d}^2_{\alpha}\!+\!h(x^k+d)\right).\\
\end{split}
\end{equation*}

\noindent Then, using similar derivation as in Theorem 1 we have:
\begin{equation*}
\begin{split}
&\min_{d\in S}   f(x^k)+\langle \nabla f(x^k),d\rangle+
L^{1-\alpha} \norm{d}_{\alpha}^2 + h(x^k+d)\\
&\le  \min_{y \in S} F(y) + L^{1-\alpha} \norm{y-x^k}_{\alpha}^2\\
&\le  \min_{\beta \in [0,1]} F(\beta x^* + (1-\beta)x^k) + \beta^2 L^{1-\alpha}\norm{x^k-x^*}_{\alpha}^2\\
&\le  \min_{\beta \in [0,1]} F(x^k)-\beta(F(x^k)-F^*) + \beta^2 L^{1-\alpha}\norm{x^k-x^*}_{\alpha}^2\\
&\le \min_{\beta \in [0,1]} F(x^k)+\beta\left(\frac{2\beta L^{1-\alpha}}{\sigma_{\alpha}}-1\right)
\left( F(x^k)-F^* \right),\\
\end{split}
\end{equation*}
where the last inequality results from \eqref{strong2}. The
statement of the theorem  is obtained by  noting that $\beta^*=\min
\{1,\frac{\sigma_{\alpha}}{4 L^{1-\alpha}}\}$ and the
following subcases:
\begin{enumerate}
\item If $\beta^*=\frac{\sigma_{\alpha}}{4 L^{1-\alpha}}$ and we take the expectation w.r.t. $\eta^k$ we get:
\begin{equation}\label{probrel2}
\phi^{k+1} - F^* \le  \left(1-\frac{\sigma_{\alpha}}{4 L^{1-\alpha}N^2} \right) (\phi^k-F^*),
\end{equation}
\item if $\beta^*=1$ and we take the expectation w.r.t. $\eta^k$ we get:
\begin{equation}\label{probrel3}
 \phi^{k+1} - F^* \le \left[1-\frac{2(1-\frac{2L^{1-\alpha}}{\sigma_{\alpha}})}{N^2} \right] (\phi^k -F^*).
\end{equation}
\end{enumerate}
\qed
\end{proof}


\subsection{Convergence in probability} \label{convprob}

Further, we establish some bounds on the required number of
iterations for which the generated sequence $x^k$ attains
$\epsilon$-accuracy with prespecified probability. In order to prove this result we
use Theorem 1 from \cite{RicTak:11} and for a clear understanding we
present it bellow.

\begin{lemma}\cite{RicTak:11} \label{Ric1}
 Let $\xi^0 >0 $ be a constant, $0<\epsilon<\xi^0$ and consider a nonnegative nonincreasing
sequence of (discrete) random variables $\{\xi^k\}_{k \ge 0}$ with one of the following properties:
\begin{enumerate}
 \item[(1)]
$E [\xi^{k+1} | \xi^k ] \le \xi^k -\frac{(\xi^k)^2}{c}$ for all $k$, where $c>0$ is a constant,
 \item[(2)]
$E [\xi^{k+1} | \xi^k ] \le \left(1 -\frac{1}{c} \right)\xi^k$ for all $k$ such that
$\xi^k\ge \epsilon$, where $c>1$ is a constant.
\end{enumerate}
Then, for some confidence level $\rho \in (0,1)$ we have in
probability that:
\begin{equation*}
   \text{Pr}(\xi^K \le \epsilon)\ge 1-\rho,
\end{equation*}
for a number $K$ of iterations which satisfies
\begin{equation*}
   K \ge \frac{c}{\epsilon}\left(1+ \log\frac{1}{\rho}\right)+2 - \frac{c}{\xi^0},
\end{equation*}
if  property (1) holds, or
\begin{equation*}
   K \ge c\log\frac{\xi^0}{\epsilon\rho},
\end{equation*}
if  property (2) holds.
\end{lemma}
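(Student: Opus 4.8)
The plan is to convert the recursive decay in expectation into a high-probability statement via Markov's inequality, after replacing $\{\xi^k\}$ by a \emph{clipped} sequence that is forced to vanish as soon as it drops to or below $\epsilon$. Concretely, set $\hat\xi^k = \xi^k \mathbf{1}_{\{\xi^k > \epsilon\}}$. Since $\{\xi^k\}$ is nonnegative and nonincreasing, $\hat\xi^k$ takes values only in $\{0\}\cup(\epsilon,\infty)$, is itself nonincreasing, and stays at $0$ once it reaches $0$. This gives the crucial identity $\{\xi^K > \epsilon\} = \{\hat\xi^K \ge \epsilon\}$, so Markov's inequality yields $\text{Pr}(\xi^K > \epsilon) = \text{Pr}(\hat\xi^K \ge \epsilon) \le E[\hat\xi^K]/\epsilon$, and since the conclusion $\text{Pr}(\xi^K \le \epsilon)\ge 1-\rho$ is just $\text{Pr}(\xi^K>\epsilon)\le\rho$, the whole proof reduces to driving the single scalar $E[\hat\xi^K]$ below $\epsilon\rho$.

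First I would check that the clipped sequence inherits the governing recursions. Conditioning on $\xi^k$: if $\xi^k \le \epsilon$ then monotonicity forces $\hat\xi^{k+1}=\hat\xi^k=0$; if $\xi^k > \epsilon$ then $\hat\xi^k=\xi^k$ and $\hat\xi^{k+1}\le\xi^{k+1}$, so the assumed bound on $E[\xi^{k+1}\mid\xi^k]$ transfers. Thus under property (2), $E[\hat\xi^{k+1}\mid\hat\xi^k]\le(1-1/c)\hat\xi^k$, and under property (1), $E[\hat\xi^{k+1}\mid\hat\xi^k]\le\hat\xi^k-(\hat\xi^k)^2/c$; moreover, because $\hat\xi^k>\epsilon$ on its support, property (1) \emph{also} implies the geometric bound $E[\hat\xi^{k+1}\mid\hat\xi^k]\le(1-\epsilon/c)\hat\xi^k$ (here one uses $\xi^0\le c$, which property (1) forces since $\xi^0-(\xi^0)^2/c\ge E[\xi^1\mid\xi^0]\ge0$, so that $\epsilon/c<1$).

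For property (2) the argument is then immediate: taking full expectations gives $E[\hat\xi^K]\le(1-1/c)^K\xi^0\le e^{-K/c}\xi^0$, and requiring $e^{-K/c}\xi^0\le\epsilon\rho$ yields $K\ge c\log\frac{\xi^0}{\epsilon\rho}$. For property (1) I would run a two-phase estimate on $\theta^k:=E[\hat\xi^k]$. In Phase 1, Jensen's inequality turns the quadratic recursion into $\theta^{k+1}\le\theta^k-(\theta^k)^2/c$; dividing by $\theta^k\theta^{k+1}$ and telescoping (exactly as in the proof of Theorem \ref{convergence1}) gives $1/\theta^k\ge 1/\xi^0+k/c$, so $\theta^{k_1}\le\epsilon$ after $k_1=\lceil c/\epsilon-c/\xi^0\rceil$ steps. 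In Phase 2, the geometric bound propagates this as $\theta^{k_1+t}\le(1-\epsilon/c)^t\epsilon\le e^{-t\epsilon/c}\epsilon$, which is $\le\epsilon\rho$ once $t\ge\frac{c}{\epsilon}\log\frac1\rho$. Summing the two phase lengths and accounting for the two ceilings produces exactly $K\ge\frac{c}{\epsilon}\bigl(1+\log\frac1\rho\bigr)+2-\frac{c}{\xi^0}$.

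The main obstacle is the Phase-1/Phase-2 splitting for property (1): a single application of either recursion is too weak. Using only the harmonic (quadratic) recursion with Markov forces a $1/\rho$ dependence on the confidence, while using only the geometric recursion from the start costs an extra $\log\frac{\xi^0}{\epsilon}$ factor; the sharp bound comes only from spending the cheap $O(c/\epsilon)$ harmonic iterations to reach level $\epsilon$ and then switching to the geometric rate to amplify confidence at cost $\frac{c}{\epsilon}\log\frac1\rho$. Verifying that the \emph{same} clipped sequence simultaneously obeys both recursions, and tracking the integer rounding that yields the additive constant $2$, are the only delicate points; everything else is the transfer of the recursion through clipping and one invocation of Markov's inequality.
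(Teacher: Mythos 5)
Your proof is correct; note, however, that the paper itself gives no proof of this lemma --- it is imported verbatim from \cite{RicTak:11} --- so the only meaningful comparison is with that reference, whose argument you have essentially reconstructed: the truncated sequence $\hat\xi^k=\xi^k\mathbf{1}_{\{\xi^k>\epsilon\}}$, Markov's inequality, and (for property (1)) the two-phase estimate that first telescopes the harmonic recursion down to level $\epsilon$ and then exploits the induced geometric contraction $E[\hat\xi^{k+1}\mid\xi^k]\le(1-\epsilon/c)\hat\xi^k$ to amplify the confidence. Your added observations --- that monotonicity makes the truncated sequence absorbing at $0$, and that property (1) forces $\xi^0\le c$ so that $\epsilon/c<1$ --- are exactly the points needed to make the transfer of the recursions rigorous, and the bookkeeping of the two ceilings correctly yields the additive constant $2$.
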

Based on this lemma we can state the following rate of convergence
in probability:
\begin{theorem}
Let $F$ be a $\sigma_{\alpha}$-strongly convex function satisfying
Assumption \ref{assumpbeg1} and $\rho > 0$ be the confidence level.
Then, the sequence $x^k$ generated by algorithm (RCD) using uniform
distribution satisfies the following rate of convergence in
probability of the expected values of the objective function:
\begin{equation*}
   \text{Pr}(\phi^K - F^* \le \epsilon)\ge 1-\rho,
\end{equation*}
with K satisfying
\begin{equation*}
   K \ge
\begin{cases}
\frac{2N^2 L^{1-\alpha}R^2_{\alpha}}{\epsilon}\left(1+ \log\frac{1}{\rho}\right)+2 -
 \frac{2N^2 L^{1-\alpha}R^2_{\alpha}}{F(x^0)-F^*}, &\sigma_{\alpha}=0 \\
\frac{N^2}{2(1-\gamma)}\log\frac{F(x^0)-F^*}{\epsilon\rho},  &\sigma_{\alpha}>0,
\end{cases}
\end{equation*}
where
$\gamma =
\begin{cases}
    1-\frac{\sigma_{\alpha}}{8 L^{1-\alpha}}, & if \ \sigma_{\alpha} \le 4 L^{1-\alpha}\\
    \frac{2L^{1-\alpha}}{\sigma_{\alpha}} , & \ \text{otherwise}.
\end{cases}
$
\end{theorem}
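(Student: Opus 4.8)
The plan is to apply Lemma \ref{Ric1} to the random sequence $\xi^k = F(x^k) - F^*$, treating the two regimes $\sigma_\alpha = 0$ and $\sigma_\alpha > 0$ separately, since they match the two distinct hypotheses of that lemma. First I would record the two structural facts that let us fit $\{\xi^k\}$ into the lemma: the sequence is nonnegative because $F^*$ is the optimal value of \eqref{model}, and it is nonincreasing along every realization because the RCD step minimizes the quadratic upper model in \eqref{extdir}, which coincides with $F(x^k)$ at $s_{ij}=0$; hence by \eqref{lips2} we get $F(x^{k+1}) \le F(x^k)$ pointwise. Thus $\{\xi^k\}_{k\ge 0}$ is exactly a nonnegative nonincreasing sequence of random variables, as required.

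For the convex case $\sigma_\alpha = 0$ I would invoke property (1). The key observation is that the chain of inequalities in the proof of Theorem \ref{convergence1} leading to \eqref{probrel1}, read one step earlier---before the outer expectation over the history $\eta^k$ is taken---already yields the per-step conditional estimate $E_{i_k j_k}[\xi^{k+1}] \le \xi^k - \frac{2}{N(N-1)L^{1-\alpha}R_\alpha^2}(\xi^k)^2$. Conditioning further on $\xi^k$, which is $\eta^k$-measurable, leaves the (already $\xi^k$-measurable) right-hand side unchanged, so property (1) holds with constant $\frac{N(N-1)}{2}L^{1-\alpha}R_\alpha^2$. Since property (1) is preserved when $c$ is enlarged, I would then take the cleaner value $c = 2N^2 L^{1-\alpha}R_\alpha^2$, which dominates it, and read the stated bound off the lemma with $\xi^0 = F(x^0)-F^*$.

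For the strongly convex case $\sigma_\alpha > 0$ I would use property (2). Here the strongly convex theorem, taken again at the conditional level before averaging over the history, gives $E_{i_k j_k}[\xi^{k+1}] \le \left(1 - \frac{2(1-\gamma)}{N^2}\right)\xi^k$ with $\gamma$ as defined there; this is precisely relations \eqref{probrel2} and \eqref{probrel3} combined through the two cases in the definition of $\gamma$. Hence property (2) holds with $c = \frac{N^2}{2(1-\gamma)}$, and $c>1$ throughout the admissible range of $\sigma_\alpha$ with $N \ge 2$, so substituting into the second estimate of Lemma \ref{Ric1} gives $K \ge \frac{N^2}{2(1-\gamma)}\log\frac{F(x^0)-F^*}{\epsilon\rho}$, as claimed. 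I expect the main obstacle to be not any new inequality but the bookkeeping: Lemma \ref{Ric1} requires per-step conditional inequalities for the random variables $\xi^k$, whereas \eqref{probrel1}--\eqref{probrel3} are stated for the fully averaged quantities $\phi^k$, so the care lies in extracting the conditional versions from the earlier proofs and in tracking the harmless slack from replacing $N(N-1)$ by $N^2$.
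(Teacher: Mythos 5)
Your proposal is correct and follows essentially the same route as the paper: both reduce the theorem to Lemma \ref{Ric1} fed with the per-step decrease estimates established in the proofs of the expectation and strong-convexity theorems, splitting into property (1) for $\sigma_\alpha=0$ and property (2) for $\sigma_\alpha>0$. The one point where you genuinely improve on the paper's wording is the choice of $\xi^k$: the paper literally sets $\xi^k=\phi^k-F^*$ and cites \eqref{probrel1}--\eqref{probrel3}, which are deterministic after averaging over $\eta^k$ and thus make the probability statement degenerate, whereas you apply the lemma to the random variables $F(x^k)-F^*$ using the conditional one-step inequalities (before the outer expectation), together with the pointwise monotonicity of $F(x^k)$ and the tower property --- this is the reading under which Lemma \ref{Ric1} actually has content, and your bookkeeping of the slack from replacing $N(N-1)/2$ by $2N^2$ is sound.
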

\begin{proof}
 Based on relation \eqref{probrel1}, we note that taking $\xi^k$ as
$\xi^k=\phi^k-F^*$,  the property $(1)$ of Lemma \ref{Ric1} holds
and thus we get the first part of our result. Relations
\eqref{probrel2} and \eqref{probrel3} in the strongly convex case
are similar instances of property $(2)$ in Theorem \ref{Ric1} from
which we get the second part of the result. \qed
\end{proof}


\section{Generalization} \label{generalization}

In this section we study the optimization problem \eqref{model}, but
 with general linearly coupling constraints:
\begin{equation}\label{generalmodel}
\begin{split}
 \min\limits_{x\in \rset^{n}} &\ F(x) \quad \left(:=\  f(x) + h(x)\right)\\
 & \text{s.t.:}  \;\   Ax=0,
\end{split}
\end{equation}
where the functions  $f$ and $h$ have the same properties as in
Assumption \ref{assumpbeg1}  and $A \in \rset^{m \times n}$ is
a matrix with $1 < m\le n$. There are very few attempts to solve this
problem through coordinate descent strategies and up to our
knowledge the only complexity result can be found in \cite{TseYun:09}.

\noindent For the simplicity of the exposition, we work in this
section  with the standard Euclidean norm, denoted by
$\norm{\cdot}_0$, on the extended space $\rset^n$. We consider the
set of all $(m+1)$-tuples of the form $\mathcal{N}=(i^1, \dots,
i^{m+1})$, where $i^p \in \{1, \dots, N\}$ for all $p= 1, \dots,
m+1$. Also, we define $p_{\mathcal{N}}$ as the probability
distribution associated with $(m+1)$-tuples of the form
$\mathcal{N}$. Given this probability distribution
$p_{\mathcal{N}}$, for this general optimization problem
\eqref{generalmodel} we propose the following random coordinate
descent algorithm:
 \begin{equation*}
\boxed{
\begin{split}
& \text{\bf Algorithm 2 (RCD$)_{\mathcal{N}}$}\\
1.&\ \text{Choose randomly a set of $(m+1)$-tuple} \;
\mathcal{N}_k=(i^1_k, \dots, i^{m+1}_k) \\
&  \text{with probability} \ p_{\mathcal{N}_k} \\
 2.&\ \text{Set} \ x^{k+1} = x^k + d_{\mathcal{N}_k},
 \end{split}}
 \end{equation*}
where the direction $d_{\mathcal{N}_k}$ is chosen as follows:
\begin{equation*}
\begin{split}
d_{\mathcal{N}_k}=\arg \min\limits_{s \in \rset^{n}} & f(x^k) + \langle\nabla f(x^k),s\rangle +
\frac{L_{\mathcal{N}_k}}{2}\norm{s}^2_0 + h(x^k + s)\\
\text{s.t.:} \;\; & As=0, \quad s_i=0 \;\; \forall i \notin \mathcal{N}_k.\\
\end{split}
\end{equation*}
We can easily see that the linearly  coupling  constraints $ Ax =0$
prevent the development of an algorithm that performs at each
iteration a minimization with respect to less than $m+1$
coordinates. Therefore we are interested in the class of iteration
updates which restricts the objective function on feasible
directions that consist of  at least $m+1$ (block) components.

 Further, we redefine the subspace $S$ as $S=\{s \in \rset^n: \
As=0 \}$ and additionally we denote the local subspace
$S_{\mathcal{N}}= \{s \in \rset^n: \;\  As=0, \;\ s_i=0  \;\ \forall
i \in \mathcal{N}\}$. Note that we still consider an ordered
$(m+1)$-tuple $\mathcal{N}_k=(i^1_k, \dots, i^{m+1}_k)$ such that
$i^p_k \neq i^l_k$ for all $p \neq l$. We observe that for a general
matrix $A$, the previous subproblem does not necessarily have a
closed form solution. However, when $h$ is coordinatewise separable,
strictly convex and piece-wise linear/quadratic with
$\mathcal{O}(1)$ pieces (e.g., $h$ given by \eqref{norm1}) there are
efficient algorithms for solving the previous subproblem in
linear-time \cite{TseYun:09}. Moreover, when $h$ is the box indicator
function (i.e., $h$ given by \eqref{indicator}) we have the
following: in the scalar case (i.e., $N=n$) the subproblem has a
closed form solution; for  the block case (i.e., $N<n$) there exist
linear-time algorithms for solving the subproblem within
$\mathcal{O}(\sum_{i \in \mathcal{N}_k} n_i)$ operations
\cite{BerKov:93}. Through a similar reasoning as in Lemma \ref{lemmalij}
we can derive that given a set of indices $ \mathcal{N}=( i^1,
\dots, i^p)$, with $p \geq 2$, the following relation holds:
\begin{equation}\label{lipsgen}
f(x+d_{\mathcal{N}}) \le f(x) + \langle \nabla f(x),
d_{\mathcal{N}}\rangle +
\frac{L_{\mathcal{N}}}{2}\norm{d_{\mathcal{N}}}^2_0,
\end{equation}
for all $x \in \rset^n$ and  $d_{\mathcal{N}} \in \rset^n$ with
nonzero entries only on the blocks $i^1, \dots, i^p$. Here,
$L_{\mathcal{N}}= L_{i^1}+ \dots + L_{i^p}$.  Moreover, based on
Lemma \ref{lemmaconf} it follows that  any $d \in S$ has a conformal
realization defined by $d=\sum_{t=1}^{s} d^t$, where the elementary
vectors $d^t \in S$ are conformal to $d$ and have at most $m+1$
nonzeros. Therefore, any vector $d \in S$ can be generated by
$d=\sum_{\mathcal{N}} s_{\mathcal{N}}$, where the vectors
$s_{\mathcal{N}} \in S_{\mathcal{N}}$  have at most $m+1$ nonzero
blocks and are conformal to $d$. We now present the main convergence
result for this method.


\begin{theorem}
\label{convergence2}
 Let $F$ satisfy Assumption \ref{assumpbeg1}.
Then, the random coordinate descent algorithm (RCD)$_{\mathcal{N}}$
that chooses uniformly at each iteration $m+1$ blocks generates a
sequence $x^k$ satisfying the following rate of convergence for the
expected values of the objective function:
\begin{equation*}
   \phi^k - F^* \le \frac{N^{m+1} LR_{0}^2 }{k+\frac{N^{m+1} LR_0^2}{F(x^0)-F^*}}.
\end{equation*}
\end{theorem}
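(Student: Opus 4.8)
The plan is to transcribe the proof of Theorem \ref{convergence1} almost verbatim, working throughout with the Euclidean norm $\norm{\cdot}_0$ (that is, $\alpha=0$) and replacing the pair $(i,j)$ by an $(m+1)$-tuple $\mathcal{N}$. First I would use that $d_{\mathcal{N}_k}$ solves the subproblem defining Algorithm 2 together with \eqref{lipsgen} to get, for the chosen tuple, $F(x^+)\le \min_{s\in S_{\mathcal{N}}} f(x)+\langle\nabla f(x),s\rangle+\frac{L_{\mathcal{N}}}{2}\norm{s}_0^2+h(x+s)$. Taking expectation over the uniform choice of $\mathcal{N}$ and discarding the inner minimum by substituting an arbitrary feasible $s_{\mathcal{N}}\in S_{\mathcal{N}}$ for each tuple yields a bound $\sum_{\mathcal{N}} p_{\mathcal{N}}\big(f(x)+\langle\nabla f(x),s_{\mathcal{N}}\rangle+\frac{L_{\mathcal{N}}}{2}\norm{s_{\mathcal{N}}}_0^2+h(x+s_{\mathcal{N}})\big)$ that holds simultaneously for every assignment of the vectors $s_{\mathcal{N}}$, exactly as in \eqref{sijineq}.

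The heart of the argument is the conformal realization. Given any $d\in S$, Lemma \ref{lemmaconf} produces $d=\sum_t d^t$ with each elementary $d^t$ conformal to $d$ and supported on at most $rank(A)+1\le m+1$ blocks, so that $d^t\in S_{\mathcal{N}_t}$ for a suitable tuple. I would then set $s^d_{\mathcal{N}}=d^t$ for the tuples occurring in the realization and $s^d_{\mathcal{N}}=0$ for all others, so that $\sum_{\mathcal{N}} s^d_{\mathcal{N}}=d$. Invoking $L_{\mathcal{N}}=L_{i^1}+\dots+L_{i^{m+1}}\le(m+1)L$ and applying Lemma \ref{lemma6.1} to the two coordinatewise separable functions $\norm{\cdot}_0^2$ and $h$ (conformality of the $s^d_{\mathcal{N}}$ is exactly the hypothesis that makes the lemma applicable), the sum collapses, each zero tuple contributing a single $h(x)$ term, and I obtain the analogue of \eqref{eqconv}:
\[
E_{\mathcal{N}}[F(x^+)]\le (1-p_{\mathcal{N}})F(x)+p_{\mathcal{N}}\Big(f(x)+\langle\nabla f(x),d\rangle+\tfrac{(m+1)L}{2}\norm{d}_0^2+h(x+d)\Big),
\]
now valid for every $d\in S$.

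From here the level-set argument of Theorem \ref{convergence1} applies without change: I would evaluate the right-hand side at $\tilde d=\arg\min_{s\in S}\big[f(x)+\langle\nabla f(x),s\rangle+\tfrac{(m+1)L}{2}\norm{s}_0^2+h(x+s)\big]$, bound the quadratic model above by $F$ using convexity of $f$, restrict the minimization to the segment $\beta x^*+(1-\beta)x$, and optimize over $\beta\in[0,1]$ using the definition of $R_0$ and $F(x)\le F(x^0)$. This gives a recursion $\Delta^{k+1}\le\Delta^k-(\Delta^k)^2/\gamma$ for $\Delta^k=\phi^k-F^*$, with $\gamma\le 2(m+1)LR_0^2/p_{\mathcal{N}}$. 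Dividing by $\Delta^k\Delta^{k+1}>0$, using $\Delta^{k+1}\le\Delta^k$, and summing over $0,\dots,k$ produces $1/\Delta^k\ge 1/\Delta^0+k/\gamma$; since the resulting bound $\Delta^k\le\gamma/(k+\gamma/\Delta^0)$ is increasing in $\gamma$, it suffices to verify $\gamma\le N^{m+1}LR_0^2$.

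This last estimate is where the only genuinely new work lies. For the uniform distribution the effective probability of each $(m+1)$-index set is $p_{\mathcal{N}}=1/\binom{N}{m+1}$, so $\gamma\le 2(m+1)LR_0^2\binom{N}{m+1}$. Writing $\binom{N}{m+1}=\tfrac{1}{(m+1)!}\prod_{l=0}^{m}(N-l)$, the factor $m+1$ inherited from $L_{\mathcal{N}}\le(m+1)L$ cancels against the $(m+1)!$ in the denominator, leaving $\gamma\le\frac{2}{m!}\prod_{l=0}^{m}(N-l)\,LR_0^2$. Using $\prod_{l=0}^{m}(N-l)\le N^{m+1}$ and $m!\ge 2$ (recall $m\ge 2$ in this section), this is at most $N^{m+1}LR_0^2$, which closes the proof. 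Every analytic step is a direct copy of Theorem \ref{convergence1} with $\alpha=0$ and the constant $2$ replaced by $m+1$; keeping the counting of tuples honest, and ensuring the $(m+1)$-factors are dominated by the combinatorial $(m+1)!$, is the main obstacle.
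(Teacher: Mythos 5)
Your proposal is correct and follows exactly the route the paper intends: the paper omits the proof of Theorem~4 entirely, stating only that it is ``similar to that of Theorem~1,'' and your transcription with $\alpha=0$, tuples $\mathcal{N}$ in place of pairs, $L_{\mathcal{N}}\le(m+1)L$, and $p_{\mathcal{N}}=1/\binom{N}{m+1}$ is precisely that adaptation. Your final combinatorial check --- that the factor $m+1$ cancels into $(m+1)!$ and that $2/m!\le 1$ for $m\ge 2$ (which holds since the section assumes $m>1$) --- correctly supplies the only detail the paper leaves unverified, namely that the resulting constant is dominated by $N^{m+1}LR_0^2$.
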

\begin{proof}
The proof is similar to that of Theorem \ref{convergence1} and we
omit it here for brevity.
\end{proof}


\section{Complexity analysis}
\label{complanalysis} In this section we analyze the total
complexity (arithmetic complexity \cite{Nes:04}) of  algorithm (RCD)
based on extended Euclidean norm for optimization problem
\eqref{model} and compare it with other complexity estimates. Tseng
presented in \cite{TseYun:09} the first complexity bounds for the (CGD)
method applied to our optimization problem \eqref{model}. Up to our
knowledge there are no other complexity results for coordinate
descent methods on the general optimization model \eqref{model}.

Note that the algorithm (RCD)  has an overall complexity w.r.t.
extended Euclidean norm  given by:
\begin{equation*}
 \mathcal{O}\left( \frac{N^2 L R^2_0}{\epsilon} \right)\mathcal{O}(i_{RCD}),
\end{equation*}
where $\mathcal{O}(i_{RCD})$ is the complexity per iteration of
algorithm (RCD). On the other hand, algorithm (CGD) has the
following complexity estimate:
\begin{equation*}
 \mathcal{O}\left( \frac{n L R^2_0}{\epsilon} \right)\mathcal{O}(i_{CGD}),
\end{equation*}
where $\mathcal{O}(i_{CGD})$ is the iteration complexity  of
algorithm (CGD). Based on the particularities and computational
effort of each method, we will show in the sequel that for some
optimization models arising  in real-world  applications the
arithmetic complexity of (RCD) method is lower than that of (CGD)
method. For certain instances of problem \eqref{model} we have that
the computation of the coordinate directional derivative of the
smooth component of the objective function is much more simpler than
 the  function evaluation  or directional derivative along
 an arbitrary direction. Note that the iteration of algorithm (RCD) uses
only a small number of coordinate directional derivatives of the
smooth part of the objective, in contrast with the (CGD) iteration
which requires the full gradient. Thus, we estimate the arithmetic
complexity of these two methods applied to a class of optimization
problems containing instances for which the directional derivative
of objective function can be computed cheaply. We recall that the
process of choosing a uniformly random pair $(i,j)$ in our method
requires $\mathcal{O}(1)$ operations.

Let us structure a general coordinate descent iteration in two
phases:\\
{\bf Phase 1}: Gather first-order information to form a quadratic
approximation of the original optimization problem. \\
{\bf Phase 2}: Solve a quadratic optimization problem using data
acquired at Phase 1 and update the current vector.\\
Both algorithms (RCD) and (CGD) share this structure but, as we will
see, there is a gap between computational complexities. We analyze
the following example:
\begin{equation}\label{genfunc}
 f(x)=\frac{1}{2} x^T Z^T Z x + q^T x,
\end{equation}
where $Z= \left[z_1 \ \dots \ z_n\right] \in \rset^{m \times n}$ has
sparse columns, with an average $p << n$ nonzero entries on each
column $z_i$ for all $i=1, \dots, n$. A particular case of this
class of functions is $f(x)=\frac{1}{2}\norm{Zx-q}^2$, which has
been considered for numerical experiments in \cite{Nes:10} and
\cite{RicTak:11}. The problem \eqref{model}, with the aforementioned
structure \eqref{genfunc} of the smooth part of the objective
function,  arises in many applications: e.g., linear SVM
\cite{TseYun:07}, truss topology \cite{NesShp:12}, internet (Google problem)
\cite{Nes:10}, Chebyshev center problems \cite{XuFre:03},  etc. The
reader can easily find many other examples of optimization problems
with cheap coordinate directional derivatives.

Further, we estimate the iteration complexity of the algorithms (RCD) and (CGD).
Given a feasible $x$, from the expression
\begin{equation*}
\nabla_i f(x) = \langle z_i, Zx \rangle + q^i,
\end{equation*}
we note that if the residual $r(x)=Zx$ is already known, then the
computation of $\nabla_i f(x)$ requires $\mathcal{O}(p)$ operations.
We consider that the dimension $n_i$ of each block is of order
$\mathcal{O}(\frac{n}{N})$. Thus, the (RCD) method updates the
current point $x$ on $\mathcal{O}(\frac{n}{N})$ coordinates and
summing up with the computation of the new residual $r(x^+)=Zx^+$,
which in this case requires $\mathcal{O}(\frac{pn}{N})$ operations,
we conclude that up to this stage, the iteration of (RCD) method has
numerical complexity  $\mathcal{O}(\frac{pn}{N})$. However,  the
(CGD) method requires the computation of the  full gradient for
which are necessary $\mathcal{O}(n p)$ operations. As a preliminary
conclusion, Phase 1 has the following complexity regarding the two
algorithms:
\begin{equation*}
\begin{split}
& \text{\it (RCD). Phase 1}: \ \ \mathcal{O} (\frac{np}{N})  \\
& \text{\it (CGD). Phase 1}: \ \ \mathcal{O} (np)
\end{split}
\end{equation*}

Suppose now that for a given $x$, the blocks
$(\nabla_{i}f(x),\nabla_{j}f(x))$ are known for (RCD) method or the
entire gradient vector $\nabla f(x)$ is available for (CGD) method
within previous computed complexities, then the second phase
requires the finding of an update direction with respect to each
method. For the general linearly constrained model \eqref{model},
evaluating the iteration complexity of both algorithms can be a
difficult task. Since in \cite{TseYun:09} Tseng provided an explicit
total computational complexity for the cases when the nonsmooth part
of the objective function $h$ is separable and piece-wise
linear/quadratic with $\mathcal{O}(1)$ pieces, for clarity of the
comparison we also analyze the particular setting when $h$ is a box
indicator function as given in equation \eqref{indicator}. For
algorithm (RCD) with $\alpha=0$, at each iteration, we require the
solution of the following problem (see \eqref{indicator}):
\begin{equation}
\label{probiter}
\begin{split}
& \min_{s_{ij} \in \rset^{n_{i}+n_{j}}} \langle \nabla_{ij} f(x),
s_{ij}\rangle
+ \frac{L_{ij}^0}{2}\norm{s_{ij}}^2_0\\
& \text{s.t.:}   \;\;\;  a_i^Ts_i+a_j^Ts_j=0,   \;\;\ (l-x)_{ij} \le
s_{ij} \le (u-x)_{ij}.
\end{split}
\end{equation}
It is shown in \cite{Kiw:07} that problem \eqref{probiter} can be
solved in $\mathcal{O}(n_{i}+n_{j})$ operations. However, in the
scalar case (i.e., $N=n$) problem \eqref{probiter} can solved in
closed form. Therefore, Phase 2 of algorithm (RCD)  requires
$\mathcal{O}(\frac{n}{N})$ operations. Finally, we estimate for
algorithm (RCD) the total arithmetic complexity in terms of the
number of blocks $N$ as:
\begin{equation*}
   \mathcal{O}\left( \frac{N^2 L R_0^2}{\epsilon} \right)\mathcal{O}(\frac{pn}{N}).
\end{equation*}

On the other hand, due to the  Gauss-Southwell rule, the (CGD)
method requires at each iteration the solution of a  quadratic
knapsack problem of dimension $n$. It is argued in \cite{Kiw:07} that
for solving the quadratic knapsack problem we need $\mathcal{O}(n)$
operations. In conclusion, the Gauss-Southwell procedure in
algorithm (CGD) requires the conformal realization of the solution
of a continuous knapsack problem and the selection of a
 ``good'' set of blocks $\mathcal{J}$. This last process has a
different cost depending on  $m$. Overall, we estimate the total
complexity of algorithm (CGD) for one equality constraint, $m=1$,
as:
\begin{equation*}
\mathcal{O}\left(\frac{n L R_0^2}{\epsilon}\right)\mathcal{O}(pn)
\end{equation*}

\vspace*{-0.8cm}
 \setlength{\extrarowheight}{5pt}
\begin{table}[ht]
\centering \caption{Comparison of  arithmetic complexities for
alg. (RCD), (CGD) and \cite{HusKel:06,LisSim:05} for $m=1$.}
\begin{tabular}{| c | c | c | c |}
\hline
Algorithm / $m=1$ & $h(x)$ & Probabilities & Complexity\\ [1ex]
\hline
\text{\bf (RCD)} & separable & $\frac{1}{N^2}$   & $\mathcal{O}(\frac{pNn LR_0^2}{\epsilon})$ \\[1ex]
\hline
\text{\bf (CGD)}& separable& greedy &$\mathcal{O}(\frac{pn^2 LR_0^2}{\epsilon})$ \\[1ex]
\hline
\text{\bf Hush \cite{HusKel:06}, List \cite{LisSim:05}} & box indicator & greedy & $\mathcal{O}(\frac{pn^2 LR_0^2}{\epsilon})$\\[1ex]
\hline
\end{tabular}
\end{table}

First, we note that in the case  $m=1$ and $N<<n$ (i.e., the block
case) algorithm (RCD) has better arithmetic complexity than
algorithm (CGD) and previously mentioned block-coordinate methods
\cite{HusKel:06,LisSim:05} (see Table 1). When $m=1$ and  $N=n$ (i.e.,
the scalar case),  by  substitution in the above expressions from
 Table 1, we have a total complexity for algorithm (RCD) comparable
to the complexity  of algorithm (CGD) and the algorithms from
\cite{HusKel:06,LisSim:05}.

On the other hand,  the complexity of choosing a random pair $(i,j)$
in  algorithm (RCD) is very low, i.e., we need  $\mathcal{O}(1)$
operations. Thus, choosing the working pair  $(i,j)$ in our
algorithm (RCD)  is much simpler than choosing the working set
$\mathcal{J}$ within the Gauss-Southwell rule for algorithm (CGD)
which assumes the following steps: first, compute the  projected
gradient direction and second, find the conformal realization of
computed direction; the overall complexity of these two steps being
$\mathcal{O}(n)$. In conclusion, the algorithm (RCD) has a very
simple implementation due to simplicity of the random choice for the
working pair and a low complexity per iteration.

For the case  $m=2$ the algorithm (RCD) needs in Phase 1 to compute
coordinate directional derivatives with complexity
$\mathcal{O}(\frac{pn}{N})$ and in Phase 2 to find the solution of a
3-block dimensional problem of the same structure as
\eqref{probiter} with complexity $\mathcal{O}(\frac{n}{N})$.
Therefore, the iteration complexity of the (RCD) method in this case
is still $\mathcal{O}(\frac{pn}{N})$. On the other hand, the
iteration complexity of the algorithm (CGD) for $m=2$ is given by
$\mathcal{O}(pn + n\log n)$ \cite{TseYun:09}.

For $m>2$,  the complexity of Phase 1 at each iteration of our
method still requires $\mathcal{O}(\frac{pn}{N})$ operations and the
complexity of Phase 2 is $\mathcal{O}(\frac{mn}{N})$, while in the
(CGD) method the iteration complexity is $\mathcal{O}(m^3n^2)$
\cite{TseYun:09}.

For the case $m>1$,  a comparison  between arithmetic complexities
of algorithms (RCD) and (CGD) is provided in Table 2. We see from
this table that depending on the values of $n, m$ and $N$, the
arithmetic complexity of (RCD) method can be better or worse than
that  of  the (CGD) method.

\setlength{\extrarowheight}{5pt}
\begin{table}[ht]
\centering
\caption{Comparison of  arithmetic complexities for
 algorithms (RCD) and (CGD) for $m \geq 2$.}
\begin{tabular}{| c | c | c |}
\hline
Algorithm & $m=2$ & $m>2$ \\ [1ex]
\hline
(RCD) & $\frac{pN^2nLR^2_0}{\epsilon}$ & $\frac{(p+m)N^m nLR^2_0}{\epsilon}$\\[1ex]
\hline
(CGD) & $\frac{(p+\log n)n^2 LR^2_0}{\epsilon}$    & $\frac{m^3 n^3 LR^2_0}{\epsilon}$\\[1ex]
\hline
\end{tabular}
\end{table}

We conclude from the rate of convergence and the previous complexity
analysis that algorithm (RCD) is easier to be implemented and
analyzed due to the randomization and the typically  very simple
iteration. Moreover, on certain classes of problems with sparsity
structure, that appear frequently in many large-scale real
applications, the arithmetic complexity of (RCD) method is better
than that of some well-known methods from the literature.  All these
arguments make the algorithm (RCD) to be competitive in the
composite optimization framework. Moreover, the (RCD) method is
suited for recently developed computational architectures (e.g.,
distributed or parallel architectures).


\section{Numerical Experiments} \label{numexp}

In this section we present extensive numerical simulations, where we
compare our algorithm (RCD) with some recently developed
state-of-the-art algorithms from the literature for solving the
optimization problem \eqref{model}: coordinate gradient descent
(CGD)  \cite{TseYun:09}, projected gradient method for composite
optimization (GM)  \cite{Nes:07} and LIBSVM  \cite{ChaLin:11}. We
tested  the four methods on  large-scale optimization problems
ranging from $n = 10^3$ to $n = 10^7$ arising in various
applications such as: support vector machine (SVM) (Section 6.1),
the  Chebyshev center of a set of points (Section 6.2) and random
generated problems with an $\ell_1$-regularization term (Section
6.3). Firstly, for the SVM application,  we compare algorithm (RCD)
against (CGD) and LIBSVM and we remark that our algorithm has the
best performance on large-scale problem instances with sparse data.
Secondly, we also observe  a more robust behavior for algorithm
(RCD) in comparison with algorithms (CGD) and (GM) when using
different initial points on Chebyshev center problem instances.
Lastly, we tested  our algorithm on randomly generated problems,
where the nonsmooth part of the objective function contains an
$\ell_1$-norm term, i.e., $\lambda \sum_{i=1}^n |x_i|$ for some
$\lambda>0$, and we compared our method with algorithms (CGD) and
(GM).

We have implemented all the algorithms in C-code  and the
experiments were run on a PC with an Intel Xeon E5410 CPU and 8 GB
RAM memory. In all algorithms we considered the scalar case, i.e.,
$N=n$ and we worked with the extended Euclidean norm ($\alpha = 0$).
In our applications the smooth part $f$ of the composite objective
function is of the form \eqref{genfunc}.  The coordinate directional
derivative at the current point for algorithm (RCD) $\nabla_i
f(x)=\langle z_i, Zx \rangle +q_i$  is computed efficiently by
knowing at each iteration the residual $r(x)=Zx$. For the (CGD)
method, the working set is chosen accordingly to Section 6 in
\cite{TseYun:07}. Therefore, the entire gradient at the current
point, $\nabla f(x)= Z^T Z x +q $, is required, which is computed
efficiently using the residual $r(x)=Zx$. For gradient and residual
computations we used  an efficient sparse matrix-vector
multiplication procedure. We coded the standard (CGD) method
presented in \cite{TseYun:09} and we have not used any heuristics
recommended by Tseng in \cite{TseYun:07}, e.g., the ``3-pair''
heuristic technique. The direction $d_{ij}$ at the current point
from subproblem \eqref{extdir} for algorithm (RCD)  is computed in
closed form for all three applications considered in this section.
For computing the direction $d_H(x;\mathcal{J})$ at the current
point from subproblem \eqref{CGDiter} in the (CGD) method for the
first two applications we coded the algorithm from \cite{Kiw:07} for
solving quadratic knapsack problems of the form \eqref{probiter}
that has linear time complexity.  For the second application, the
direction at the current point for algorithm (GM) is computed using
a linear time simplex projection algorithm introduced in
\cite{JudRay:08}. For the
 third application, we used the equivalent formulation  of the subproblem
\eqref{CGDiter}  given in \cite{TseYun:09}, obtaining for both
algorithms (CGD) and (GM) an iteration which requires the solution
of some double size quadratic knapsack problem of the
form~\eqref{probiter}.

In the following tables we present for each algorithm  the final
objective function value (obj), the number of iterations (iter) and
the necessary CPU time  for our computer to execute all the
iterations. As the algorithms (CGD),  LIBSVM and (GM) use the whole
gradient information to obtain the working set and to find the
direction at the current point, we also report for the algorithm
(RCD) the equivalent number of full-iterations which means the total
number of iterations divided by $\frac{n}{2}$ (i.e., the number of
iterations groups $x^0, x^{n/2}, \dots, x^{k n/2 }$).


\subsection{Support vector machine}
In order to better understand the practical performance of our
method, we have tested the algorithms (RCD), (CGD) and LIBSVM on
two-class data classification problems with linear kernel, which is
a well-known real-world application that can be posed as a
large-scale optimization problem in the form \eqref{model} with a
sparsity structure. In this section, we describe our implementation
of algorithms (RCD), (CGD) \cite{TseYun:07} and LIBSVM \cite{ChaLin:11} and
report the numerical results on different test problems. Note that
linear SVM is a technique mainly used for text classification, which
can be formulated as the following optimization problem:
\begin{equation}\label{modelnum}
\begin{split}
 \min\limits_{x\in \rset^n} \ &\frac{1}{2}x^TZ^TZx - e^Tx + \mathbf{1}_{[0,C]}(x)\\
\text{s.t.:}  & \;\;  a^Tx=0, \\
\end{split}
\end{equation}
where $\mathbf{1}_{[0,C]}$ is the indicator function for the box
constraint  set $[0,C]^n$, $Z \in \rset^{m \times n}$ is the
instance matrix with an average sparsity degree $p$ (i.e.,  on
average, $Z$ has $p$ nonzero entries on each column), $a \in
\rset^n$ is the label vector of instances, $C$ is the penalty
parameter and $e= [1 \dots 1]^T \in \rset^n$. Clearly, this model
fits the aforementioned class of functions \eqref{genfunc}.  We set
the primal penalty parameter $C=1$ in all SVM test  problems. As in
\cite{TseYun:07}, we initialize all the algorithms with $x^0=0$. The
stopping criterion used in the algorithm (RCD) is:
$f(x^{k-j})-f(x^{k-j+1})\le \epsilon$, where
 $j=0, \dots, 10$, while for the algorithm (CGD)
we use the stopping criterion $f(x^k) - f(x^{k+1})\le \epsilon$,
where $\epsilon=10^{-5}$.

\setlength{\extrarowheight}{5pt}
\begin{table}[ht]
\centering \caption{Comparison of algorithms (RCD), (CGD) and
library LIBSVM on SVM problems.} {\tiny
\begin{tabular}{| p{0.6cm} | p{1.2cm} | p{2.5cm} | p{2.3cm} | p{2.3cm} |}
\hline
Data set & $n/m$ & (RCD) & (CGD) &LIBSVM\\
\hline
& &full-iter/obj/time(min)  &    iter/obj/time(min)  &   iter/obj/time(min)  \\
\hline
\hline
a7a&16100/122 $(p=14)$&11242/-5698.02/2.5& 23800/-5698.25/21.5&63889/-5699.25/0.46\\
\hline
a8a & 22696/123 $(p=14)$& 22278/-8061.9/18.1 & 37428/-8061.9/27.8 & 94877/-8062.4/1.42\\
\hline
a9a  & 32561/123 $(p=14)$&15355/-11431.47/7.01&45000/-11431.58/89 &78244/-11433.0/2.33\\
\hline
w8a & 49749/300 $(p=12)$& 15380/-1486.3/26.3 & 19421/-1486.3/27.2 & 130294/-1486.8/42.9\\
\hline
ijcnn1 & 49990/22 $(p=13)$&7601/-8589.05/6.01&9000/-8589.52/16.5&15696/-8590.15/1.0\\
\hline
web & 350000/254 $(p=85)$& 1428/-69471.21/29.95 & 13600/-27200.68/748 & 59760/-69449.56/467 \\
\hline
covtyp  & 581012/54 $(p=12)$&1722/-337798.34/38.5& 12000/-24000/480 & 466209/-337953.02/566.5 \\
\hline
test1 & $2.2~\cdot~10^6/10^6$ $(p=50)$&  228/-1654.72/51 & 4600/-473.93/568 & * \\
\hline
test2 & $10^7/5\cdot 10^3$ $(p=10)$ & 350/-508.06/112.65 & 502/-507.59/516.66 & * \\
\hline
\end{tabular}
}
\end{table}

We report in Table $3$ the results for algorithms (RCD), (CGD) and
LIBSVM implemented in the scalar case, i.e., $N=n$. The data used
for the experiments can be found on the LIBSVM webpage
(http://www.csie.ntu.edu.tw/cjlin/libsvmtools/ datasets/). For
problems with very large dimensions, we generated the data randomly
(see ``test1'' and ``test2'')  such that the nonzero elements  of
$Z$ fit into the available memory of our computer. For each
algorithm we present the final objective function value (obj), the
number of iterations (iter) and the necessary CPU time (in minutes)
 for our computer to execute all the iterations.  For the
algorithm (RCD) we report the equivalent number of full-iterations,
that is  the number of iterations groups $x^0, x^{n/2}, \dots, x^{k
n/2 }$. On small test problems we observe that LIBSVM outperforms
algorithms (RCD) and (CGD), but we still have that the CPU time for
algorithm (RCD) does not exceed $30$ min, while algorithm (CGD)
performs much worse. On the other hand, on large-scale problems the
algorithm (RCD) has the best behavior among the three tested
algorithms (within a factor of $10$). For very large  problems ($n
\geq 10^6$), LIBSVM has not returned any result within $10$ hours.

\begin{figure}[ht]
\centering \caption{Performance of algorithm (RCD) for different
block dimensions.}
\includegraphics[width=6cm]{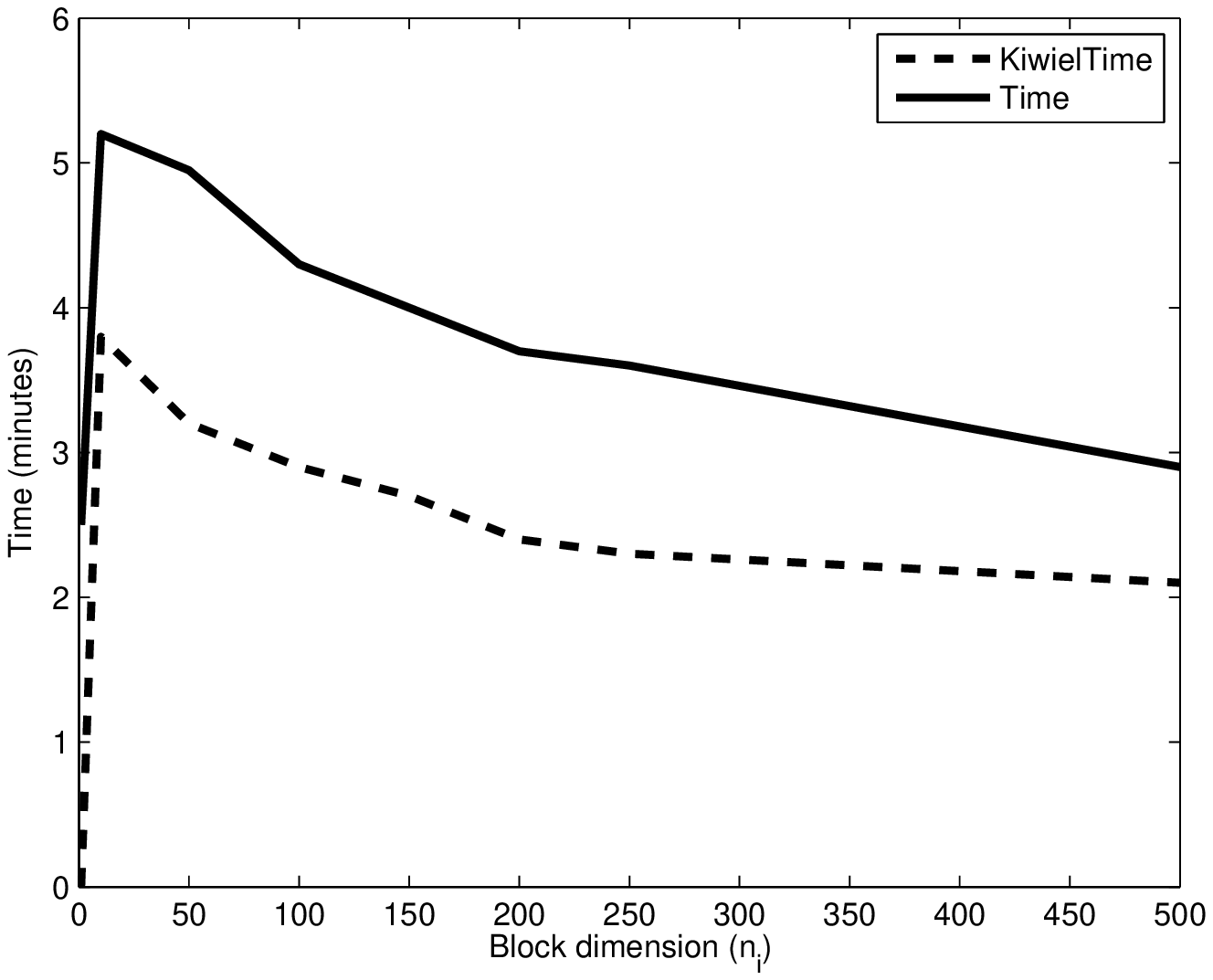}
\includegraphics[width=6cm]{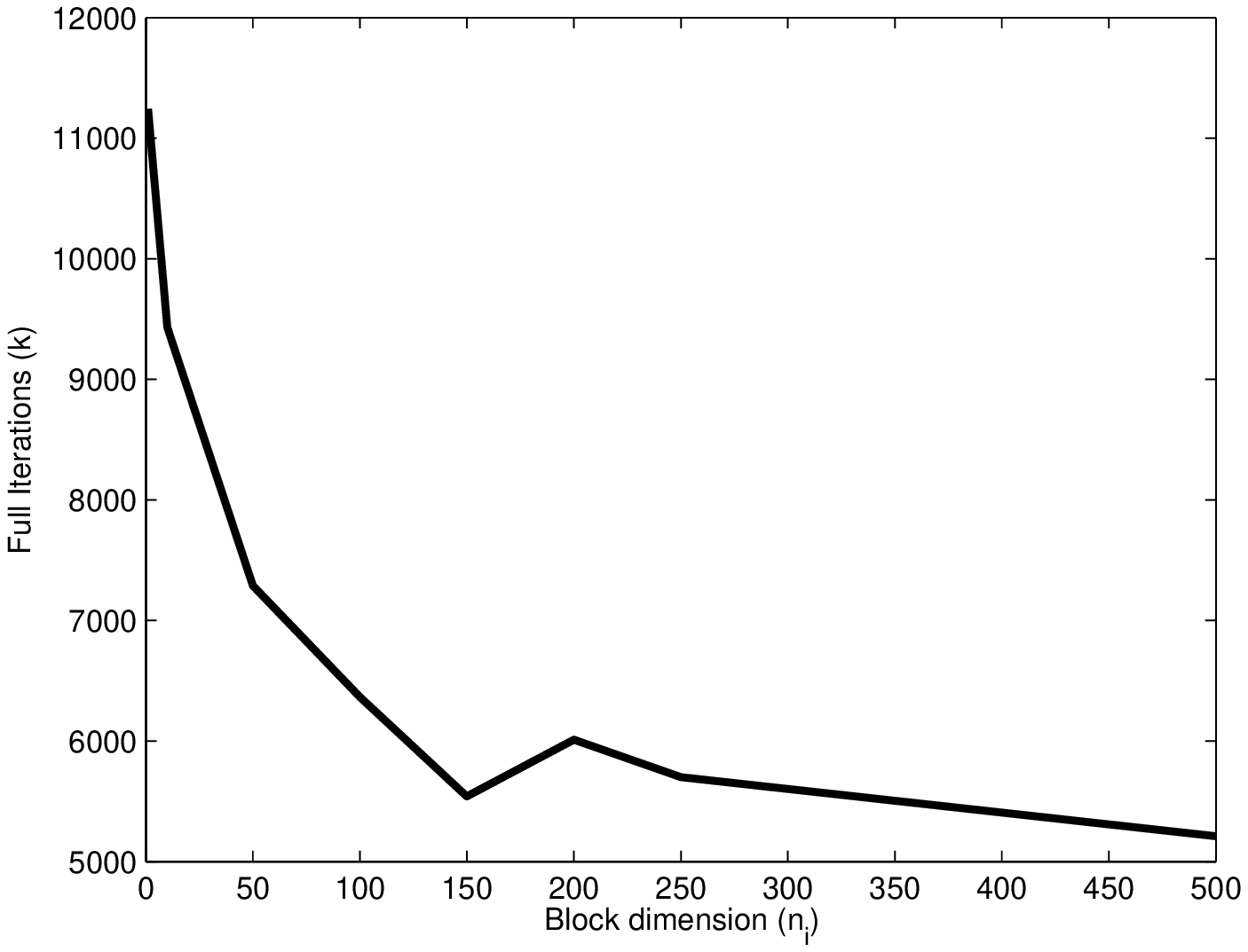}
\end{figure}

For the block case (i.e., $N \leq n$), we have plotted for algorithm
(RCD) on the test problem ``a7a'' the  CPU time and total time (in
minutes) to solve knapsack problems (left) and the number of
full-iterations (right) for different dimensions of the blocks
$n_i$. We see that the number of iterations decreases with the
increasing dimension of the blocks, while the CPU time increases
w.r.t. the scalar case due to the fact that for $n_i > 1$ the
direction $d_{ij}$ cannot be computed in closed form as in the
scalar case (i.e., $n_i=1$), but requires solving a quadratic
knapsack problem  \eqref{probiter} whose solution can be computed in
$\mathcal{O}(n_{i}+n_{j})$ operations \cite{Kiw:07}.


\subsection{Chebyshev center of a set of points}
Many real applications such as location planning of shared
facilities, pattern recognition, protein analysis, mechanical
engineering and computer graphics (see e.g., \cite{XuFre:03} for
more details and appropriate references) can be formulated as
finding the Chebyshev center of a given set of points.  The
Chebyshev center problem involves the following: given a set of
points $z_1, \dots, z_n \in \rset^m$, find the center $z_c$ and
radius $r$ of the smallest enclosing ball of the given points. This
geometric problem can be formulated as the following optimization
problem:
\begin{align*}
& \min\limits_{r, z_c} \;  r\\
& \text{s.t.:}  \;\;\   \norm{z_i - z_c}^2 \le r \quad \forall i=1,
\dots, n,
\end{align*}
where $r$ is the radius  and $z_c$ is the center  of the enclosing
ball. It can be immediately seen that the dual formulation of this
problem is a particular case of our linearly constrained
optimization model \eqref{model}:
\begin{align}
& \min\limits_{x \in \rset^n} \norm{Zx}^2 - \sum\limits_{i=1}^n \norm{z_i}^2x_i  + \mathbf{1}_{[0,\infty)}(x) \label{problem3}\\
& \text{s.t.:}  \;\;\   e^T x = 1,  \nonumber
\end{align}
where $Z$ is the matrix containing the given points $z_i$ as
columns. Once an optimal solution $x^*$ for the dual formulation is
found, a primal solution can be recovered as follows:
\begin{equation}\label{radcent}
    r*= \left( -\norm{Zx^*}^2 + \sum\limits_{i=1}^n \norm{z_i}^2 x_i^* \right)^{1/2},  \qquad  z_c^* = Z x^*.
\end{equation}

\begin{figure}[h]
\label{fig1} \centering \caption{Performance of algorithms (RCD),
(GM) and (CGD) for 50 full-iterations and initial point $e_1$ (top)
and $\frac{e}{n}$ (bottom) on a randomly generated matrix  $Z \in
\rset^{2 \times 1000}$.}
\includegraphics[width=4cm]{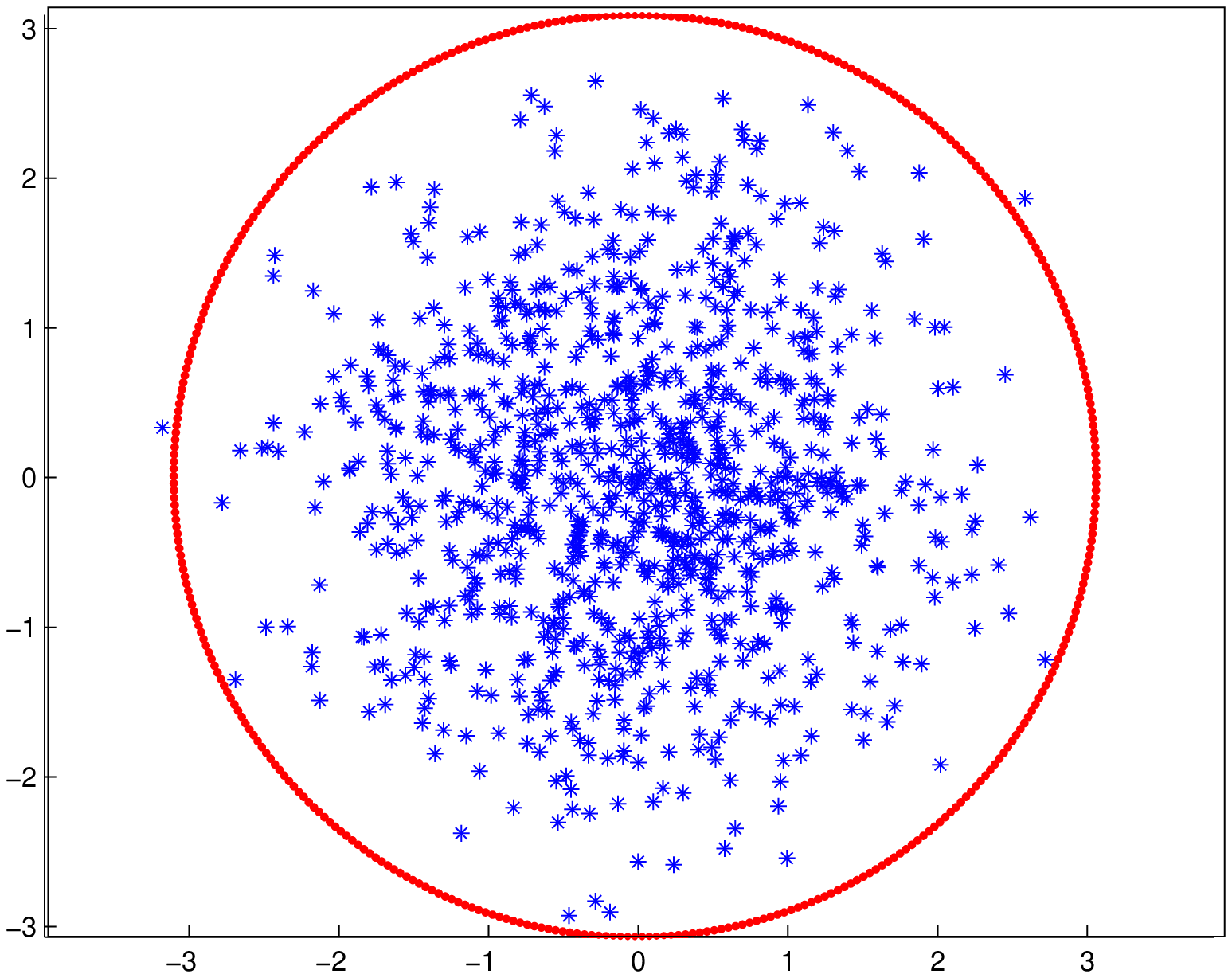}
\includegraphics[width=4cm]{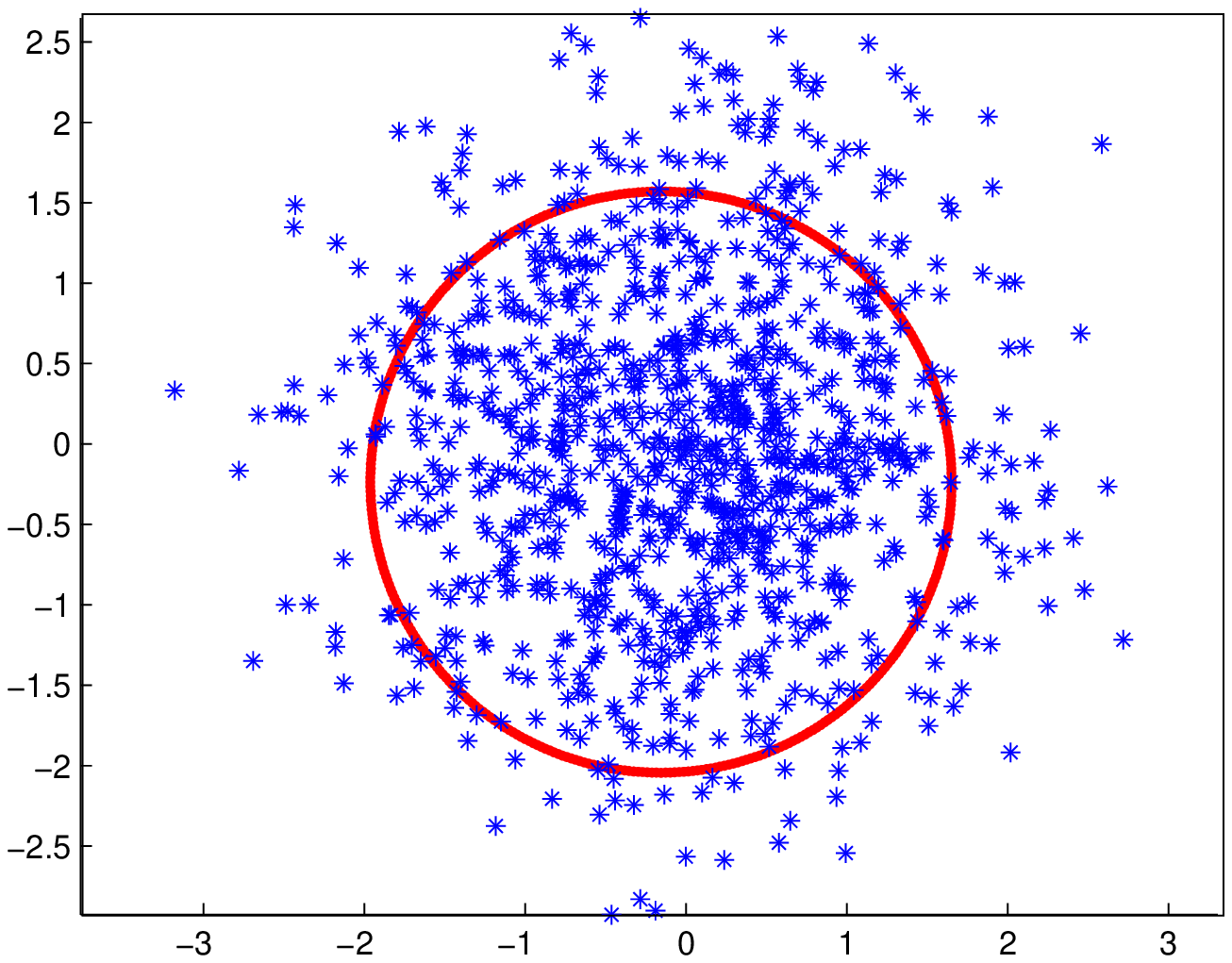}
\includegraphics[width=4cm]{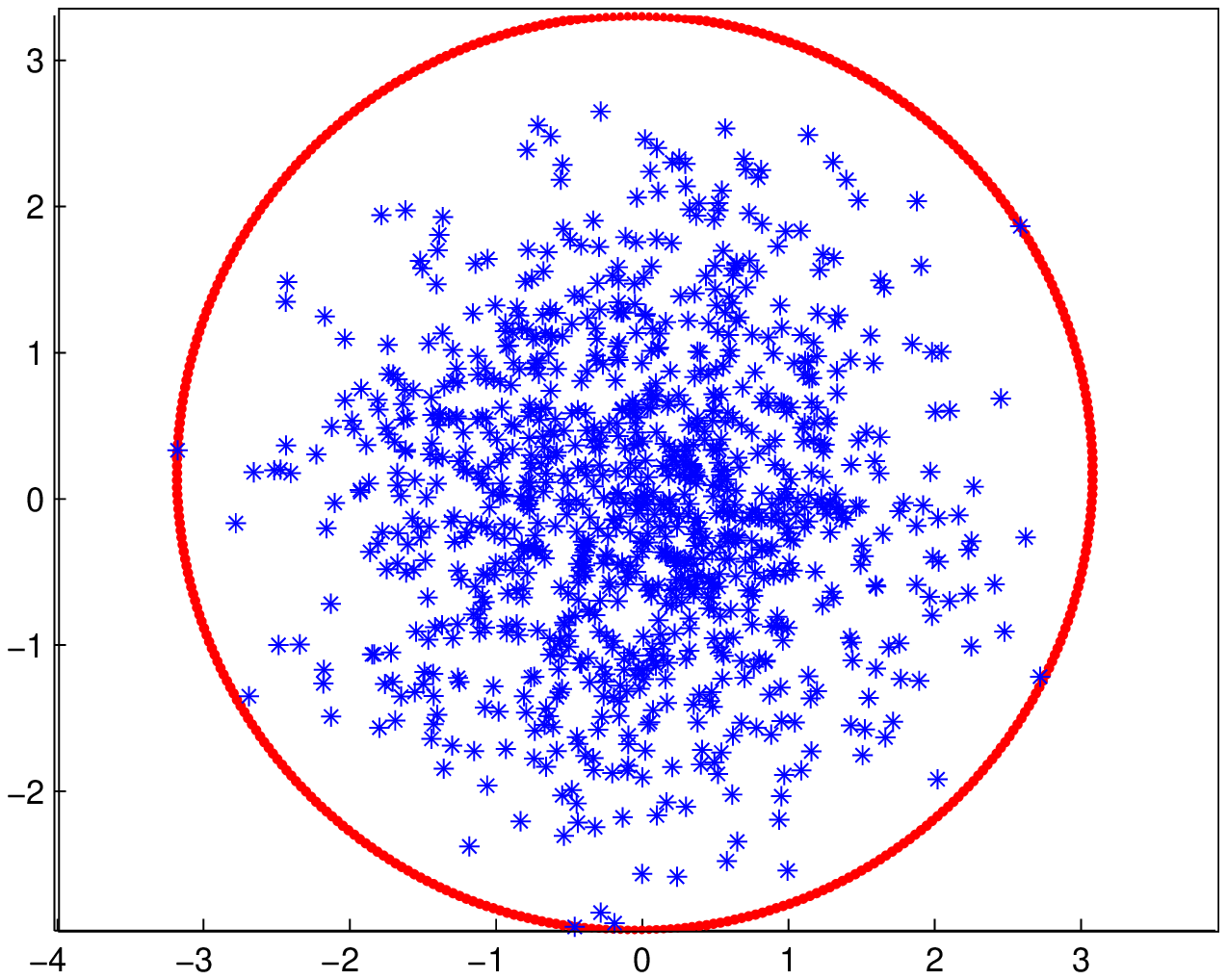}
\includegraphics[width=4cm]{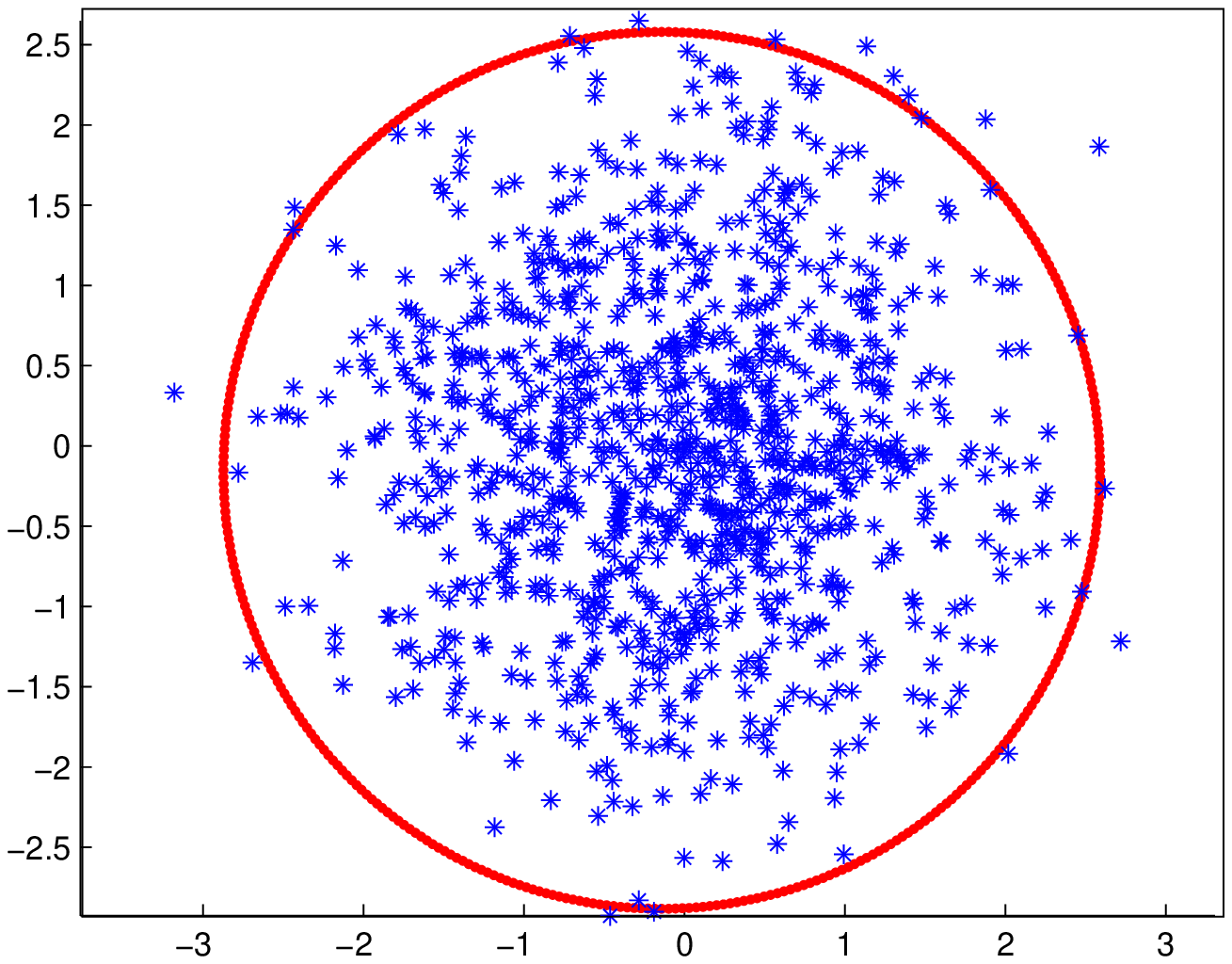}
\includegraphics[width=4cm]{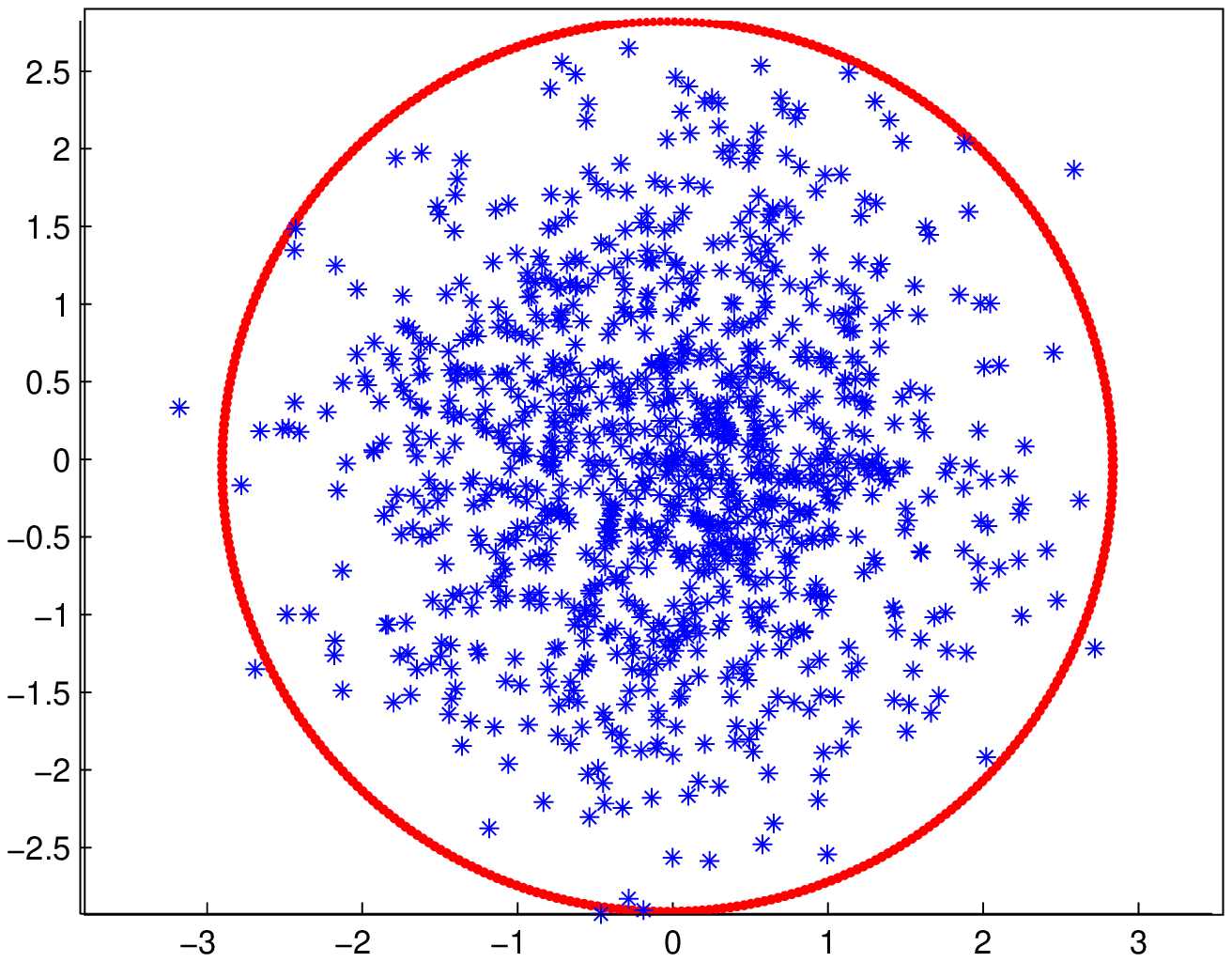}
\includegraphics[width=4cm]{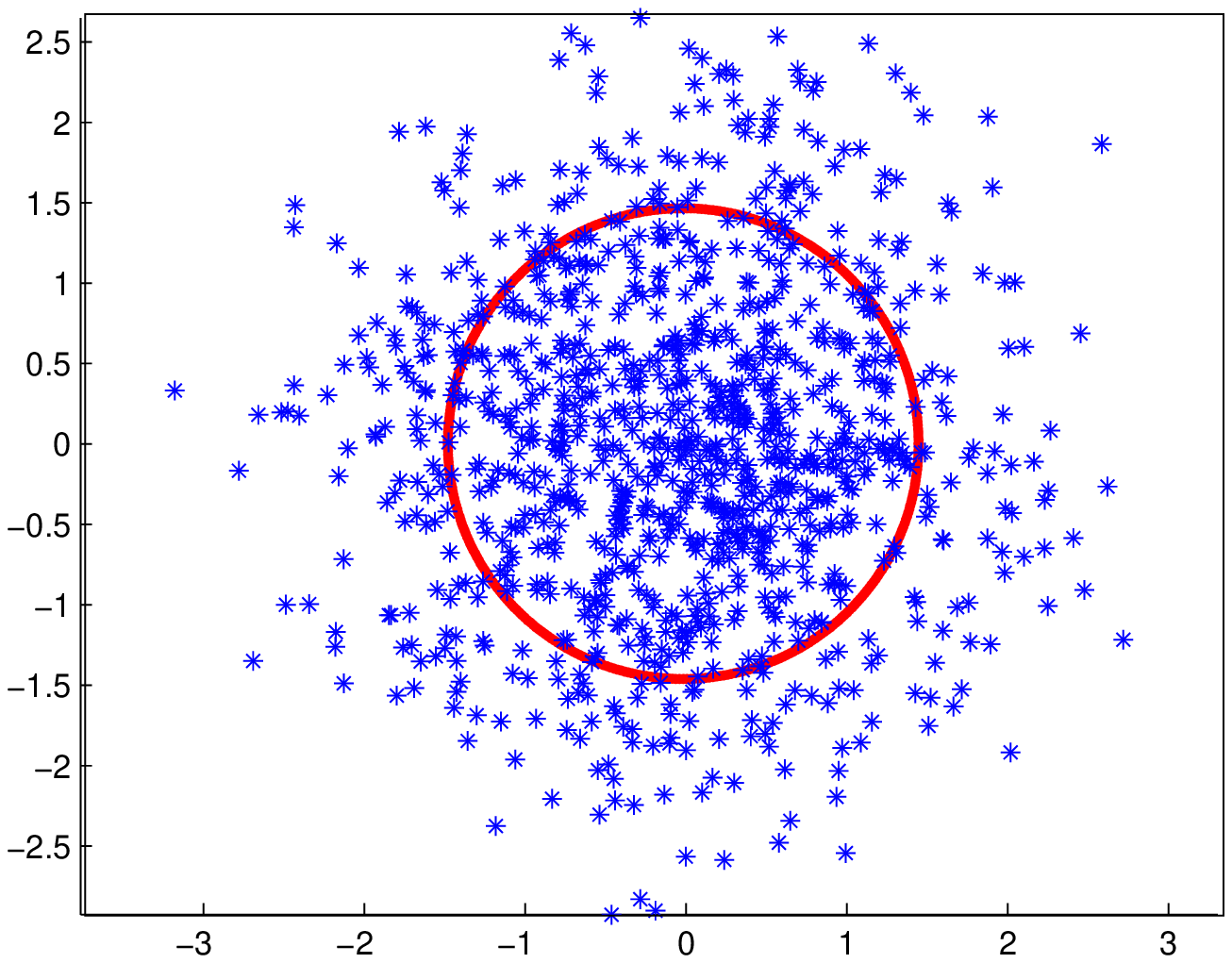}
RCD \hspace{3.5cm} GM \hspace{3.5cm} CGD
\end{figure}

The direction $d_{ij}$  at the current point in the algorithm (RCD)
is computed in closed form. For computing the direction in the (CGD)
method  we need to solve a quadratic knapsack problem that has
linear time complexity \cite{Kiw:07}.  The direction at the current
point for algorithm (GM) is computed using a linear time simplex
projection algorithm introduced in \cite{JudRay:08}. We compare
algorithms (RCD), (CGD)  and (GM) for a set of large-scale problem
instances generated randomly with a uniform distribution. We recover
a suboptimal radius and Chebyshev center using the same set of
relations \eqref{radcent} evaluated at the final iteration point
$x^{k}$ for all three algorithms.

\begin{figure}[t]
\label{fig2} \centering \caption{Time performance of algorithms
(RCD), (GM) and (CGD) for initial point $\frac{e}{n}$ (left) and
$e_1$(right) on a randomly generated matrix  $Z \in \rset^{30 \times
1000}$. }
\includegraphics[width=6cm]{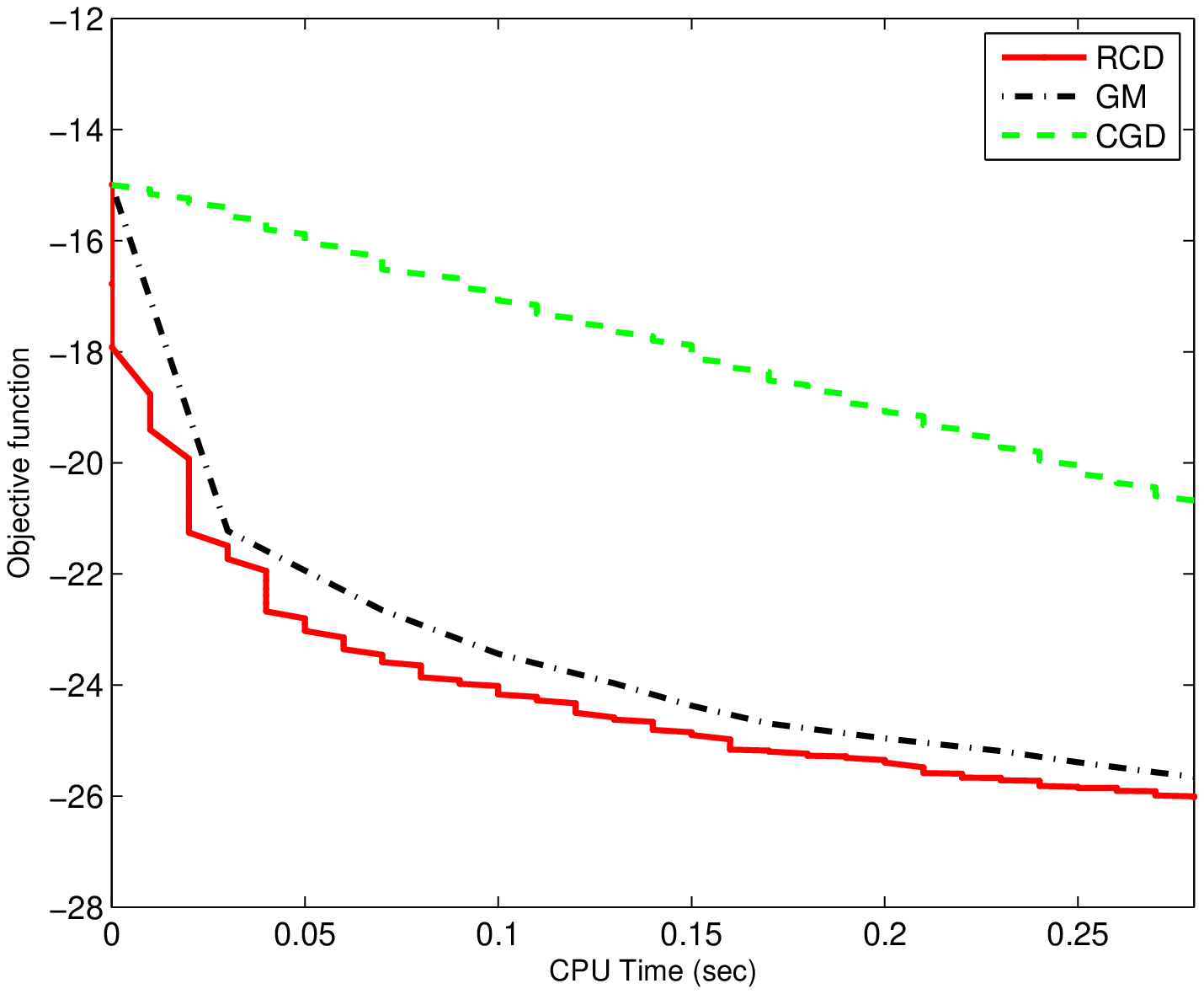}
\includegraphics[width=6cm]{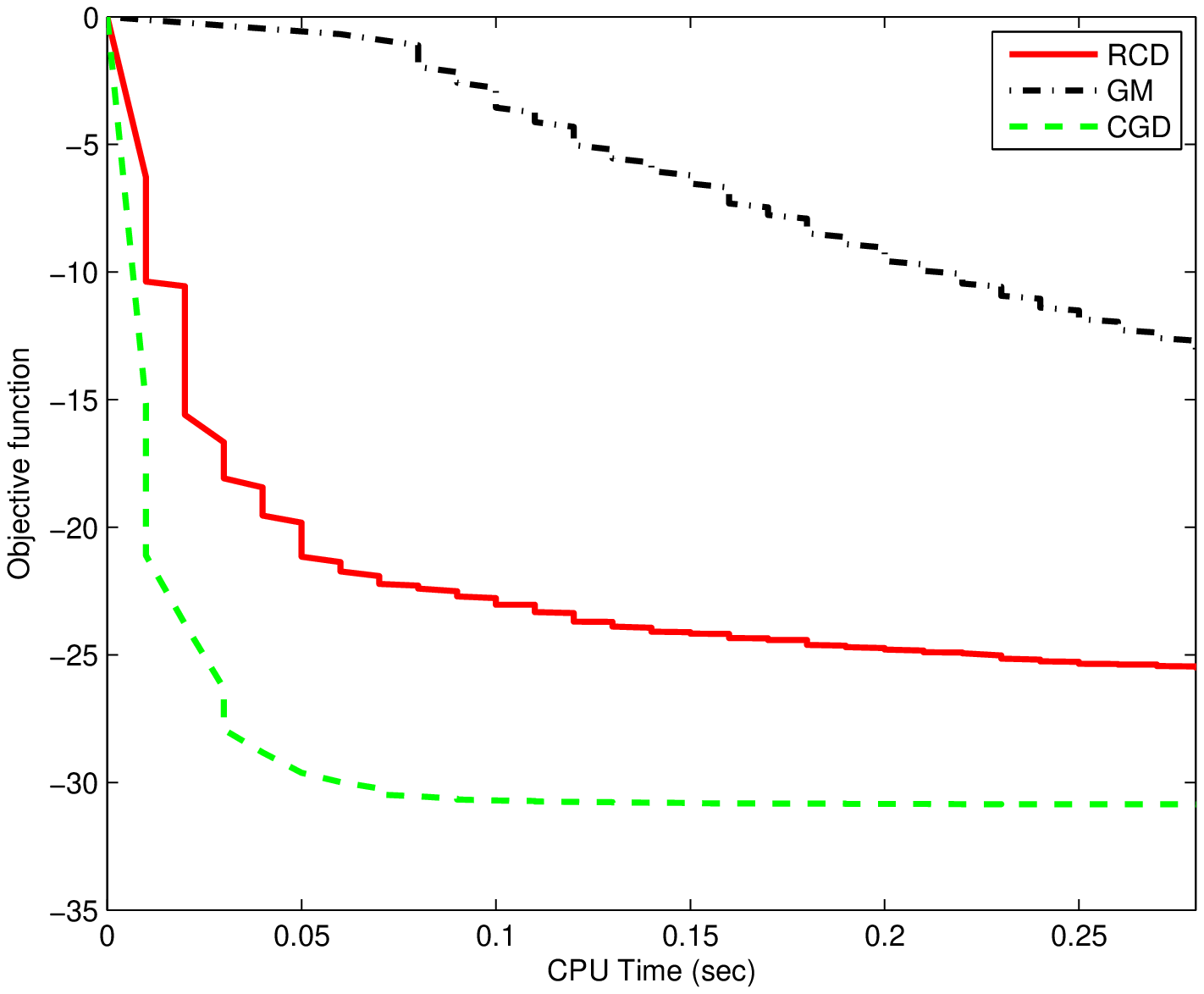}
\end{figure}

In Fig. 2 we present the performance of the three algorithms (RCD),
(GM) and (CGD)  on  a randomly generated matrix  $Z \in \rset^{2
\times 1000}$  for $50$ full-iterations with two different initial
points: $x^0 = e_1$ (the vector with the first entry $1$ and the
rest of the entries zeros) and $x^0 = \frac{e}{n}$. Note that for
the initial point $x^0 = e_1$, the algorithm (GM) is outperformed by
the other two methods: (RCD) and (CGD). Also, if all three
algorithms are initialized with $x^0 = \frac{e}{n}$, the algorithm
(CGD) has the worst performance among all three. We observe that our
algorithm (RCD) is very robust against the initial point choice.

In Fig. 3  we plot   the objective function evaluation over time (in
seconds) for the three algorithms  (RCD), (GM) and (CGD) on a matrix
$Z \in \rset^{30 \times 1000}$. We observe that the algorithm (RCD)
has a comparable performance with algorithm (GM) and a much better
performance than (CGD) when the initial point is taken
$\frac{e}{n}$. On the other hand, the algorithm (GM) has the worst
behavior among all three methods when sparse initializations are
used. However, the behavior of our algorithm (RCD) is not dependent
on   the sparsity of the initial point.

\begin{table}[h]
\centering \caption{Comparison of algorithms (RCD), (CGD) and (GM)
on Chebyshev center problems.} {\tiny
\begin{tabular}{| p{0.4cm} | p{0.8cm} | p{2.4cm} | p{2.4cm} | p{2.4cm} | p{0.3cm} |}
\hline
$x^0$ & \centering $n$ \hspace{15pt} $m$   & (RCD) & (CGD) &GM\\
\hline
&   &      full-iter/obj/time(sec)  &   iter/obj/time(sec)   &   iter/obj/time(sec) \\
\hline \hline
\multirow{6}{*}{$\frac{e}{n}$} & \centering 5 $\cdot$ $10^3$ 10 & 2064/-79.80/0.76
& 4620/-79.80/5.3 &  17156/-79.82/5.6 \\\cline{2-5}
& \centering $10^4$ \hspace{20pt} 10 & 6370/-84.71/4.75  &  9604/-84.7/23.2  &  42495/-84.71/28.01 \\ \cline{2-5}
& \centering 3 $\cdot$ $10^4$ 10  &  13213/-87.12/31.15  &  27287/-86.09/206.52  &  55499/-86.09/111.81 \\ \cline{2-5}
& \centering 5 $\cdot$ $10^3$ 30 & 4269/-205.94/2.75 & 823/-132.08/0.6 &  19610/-204.94/13.94\\ \cline{2-5}
& \centering $10^4$ \hspace{20pt} 30 &  5684/-211.95/7.51  & 9552/-211.94/33.42
&  28102/-210.94/40.18 \\ \cline{2-5}
& \centering 3 $\cdot$ $10^4$ 30 &  23744/-215.66/150.86  &   156929/-214.66/1729.1  &  126272/-214.66/937.33 \\
\hline \hline
\multirow{6}{*}{$e_1$} & \centering 5 $\cdot$ $10^3$ 10 &
 2392/-79.81/0.88  &  611/-80.8/0.77  &  29374/-79.8/9.6  \\ \cline{2-5}
& \centering $10^4$ \hspace{20pt} 10 &  9429/-84.71/7.05  &  350/-85.2/0.86  &  60777/-84.7/40.1\\
\cline{2-5}
& \centering 3 $\cdot$ $10^4$  10 & 13007/-87.1/30.64  &  615/-88.09/6.20   &  129221/-86.09/258.88\\\cline{2-5}
& \centering 5 $\cdot$ $10^3$ 30 & 2682/-205.94/1.73  &  806/-206.94/1.13 &  35777/-204.94/25.29 \\\cline{2-5}
 & \centering $10^4$ \hspace{20pt} 30  &  4382/-211.94/5.77  &  594/-212.94/2.14
& 59825/-210.94/85.52 \\ \cline{2-5}
& \centering 3 $\cdot$ $10^4$ 30  &  16601/-215.67/102.11  &  707 /-216.66/8.02  &   191303/-214.66/1421 \\
\hline
\end{tabular}
}
\end{table}
In Table 4, for a number of $n= 5\cdot 10^3, 10^4$ and $3\cdot 10^4$
points generated randomly using uniform distribution in $\rset^{10}$
and $\rset^{30}$, we compared  all three algorithms (RCD), (CGD) and
(GM) with two different initial points: $x^0=e_1$ and
$x^0=\frac{e}{n}$. Firstly, we  computed $f^*$ with the algorithm
(CGD) using $x^0 = e_1$  and imposed  the termination criterion
$f(x^k) - f(x^{k+1}) \le \epsilon$, where $\epsilon=10^{-5}$.
Secondly, we  used the precomputed optimal value $f^*$ to test the
other algorithms with termination criterion $f(x^k) - f^* \le 1$ or
$2$. We clearly see that our algorithm (RCD) has superior
performance over the (GM) method and is comparable with (CGD) method
when we start from $x^0= e_1$. When we start from $x^0 = \frac{e}{n}
$ our algorithm provides better performance in terms of objective
function and CPU time (in seconds) than the  (CGD) and (GM) methods
(at least $6$ times faster).  We also observe that our algorithm is
not sensitive w.r.t. the initial point.


\subsection{Random generated problems with $\ell_1$-regularization term}
\label{appl3} In this section we  compare  algorithm (RCD) with the
methods (CGD) and (GM) on problems with composite objective
function, where  nonsmooth part contains an $\ell_1$-regularization
term $ \lambda \sum_{i=1}^n |x_i|$. Many applications from signal
processing and data mining can be formulated into the following
optimization problem \cite{CanRom:06,QinSch:10}:
\begin{align}
  \min_{x \in \rset^n}  & \frac{1}{2}x^TZ^TZx + q^Tx + \left( \lambda \sum_{i=1}^n |x_i| + \mathbf{1}_{[l,u]}(x) \right) \\\label{probrand}
 & \text{s.t.:} \;\;\ a^T x=b, \nonumber
\end{align}
where $Z \in \rset^{m \times n}$  and  the penalty parameter
$\lambda > 0$. Further, the rest of the parameters are chosen as
follows:  $a=e$, $b=1$ and $-l=u =1$. The direction $d_{ij}$ at the
current point in the algorithm (RCD) is computed in closed form. For
computing the direction in the (CGD) and (GM) methods we need to
solve a double size quadratic knapsack problem of the form
\eqref{probiter}  that has linear time complexity \cite{Kiw:07}.

\begin{table}[h]
\centering \caption{Comparison of algorithms (RCD), (CGD) and (GM)
on  $\ell_1$-regularization problems.} {\tiny
\begin{tabular}{|p{0.4cm} |p{0.4cm} | p{0.7cm} | p{2.5cm} | p{2.5cm} | p{2.5cm} |}
\hline
$x^0$ & $\lambda$ & $n$ & (RCD) & (CGD) &(GM)\\
\hline
&   &    &  full-iter/obj/time(sec)  &   iter/obj/time(sec)   &   iter/obj/time(sec) \\
\hline \hline
\multirow{10}{*}{$\frac{e}{n}$} & \multirow{6}{*}{0.1}
& $10^4$ & 905/-6.66/0.87  &  10/-6.67/0.11   &  9044/-6.66/122.42  \\ \cline{3-6}
&& 5 $\cdot 10^4$ & 1561/-0.79/12.32  &  8/-0.80/0.686  & 4242/-0.75/373.99  \\ \cline{3-6}
&& $10^5$ &  513/-4.12/10.45  & 58/-4.22/7.55  & 253/-4.12/45.06  \\ \cline{3-6}
&& 5 $ \cdot 10^5$ & 245/-2.40/29.03  &  13/-2.45/9.20   &  785/-2.35/714.93  \\ \cline{3-6}
&& 2 $ \cdot 10^6$ & 101/-10.42/61.27  &  6/-10.43/22.79  &  1906/-9.43/6582.5  \\ \cline{3-6}
&& $10^7$ &  29/-2.32/108.58  &  7/-2.33/140.4   &  138/-2.21/2471.2  \\ \cline{2-6}
& \multirow{6}{*}{10}
& $10^4$ & 316/11.51/0.29  &  5858/11.51/35.67 & 22863/11.60/150.61  \\ \cline{3-6}
&& 5 $\cdot 10^4$ & 296/23.31/17.65  & 1261/23.31/256.6  & 1261/23.40/154.6 \\ \cline{3-6}
&& $10^5$ &  169/22.43/12.18  &  46/22.34/15.99  & 1467/22.43/423.4  \\ \cline{3-6}
&& 5 $ \cdot 10^5$ & 411/21.06/50.82 &  37/21.02/22.46   &  849/22.01/702.73  \\ \cline{3-6}
&& 2 $ \cdot 10^6$ & 592/11.84/334.30  &  74/11.55/182.44   &  664/12.04/2293.1  \\
\cline{3-6}
&& $10^7$ & 296/20.9/5270.2  &  76/20.42/1071.5   &  1646/20.91/29289.1    \\
\hline \hline
 \multirow{10}{*}{$e_1$} & \multirow{6}{*}{0.1} &
$10^4$
& 536/-6.66/0.51 & 4/-6.68/0.05   &  3408/-6.66/35.26  \\
\cline{3-6} && 5 $\cdot 10^4$ &  475/-0.79/24.30  &
84564/-0.70/7251.4 & 54325/-0.70/4970.7 \\ \cline{3-6}
&& $10^5$ & 1158/-4.07/21.43  &  118/-4.17/24.83 & 6699/-3.97/1718.2\\
\cline{3-6}
 && 5 $ \cdot 10^5$ & 226/-2.25/28.81  &  24/-2.35/29.03   & 2047/-2.25/2907.5 \\ \cline{3-6}
 && 2$ \cdot 10^6$ & 70/-10.42/40.4  &  166/-10.41/632   & 428/-10.33/1728.3  \\ \cline{3-6}
 && $10^7$ & 30/-2.32/100.1  &  *   &  376/-2.22/6731  \\ \cline{2-6} &
\multirow{6}{*}{10} & $10^4$ & 1110/11.51/1.03  &  17/11.52/0.14   &  184655/11.52/1416.8  \\ \cline{3-6}
&& 5 $\cdot 10^4$ & 237/23.39/1.22  &  21001/23.41/4263.5   &  44392/23.1/5421.4  \\
\cline{3-6} && $10^5$ & 29/22.33/2.47  &  *   &  *  \\ \cline{3-6}
&& 5 $ \cdot 10^5$ & 29/21.01/3.1  &  *   &  *  \\ \cline{3-6} &&2$
\cdot 10^6$ & 9/11.56/5.85  &  *   &  *  \\ \cline{3-6}
&& $10^7$ & 7/20.42/14.51  &  *   &  *    \\
\hline
\end{tabular}
}
\end{table}

In Table 5, for dimensions ranging from  $n= 10^4$  to $n = 10^7$
and for $m=10$, we  generated randomly the matrix $Z \in \rset^{m
\times n}$ and $q \in \rset^n$ using uniform distribution. We
compared all three algorithms (RCD), (CGD) and (GM) with two
different initial points $x^0=e_1$ and $x^0=\frac{e}{n}$ and two
different values of the penalty parameter $\lambda=0.1$ and $\lambda
= 10$. Firstly, we  computed $f^*$ with the algorithm (CGD) using
$x^0 = \frac{e}{n}$ and imposed  the termination criterion $f(x^k) -
f(x^{k+1}) \le \epsilon$, where $\epsilon=10^{-5}$. Secondly, we
used the precomputed optimal value $f^*$ to test the other
algorithms with termination criterion $f(x^k) - f^* \le 0.1$ or $1$.
For the penalty parameter $\lambda = 10$ and initial point $e_1$ the
algorithms (CGD) and (GM) have not returned any result within $5$
hours. It can be clearly seen from Table 5 that for most of the
tests with the initialization  $x^0 = e_1$ our algorithm (RCD)
performs up to $100$ times faster than the other two methods. Also,
note that when we start from $x^0 = \frac{e}{n}$ our algorithm
provides a comparable performance, in terms of objective function
and CPU time (in seconds), with algorithm (CGD). Finally, we observe
that algorithm (RCD) is the most robust w.r.t. the initial point
among all three tested methods.


\begin{thebibliography}{13}


\bibitem{BecTet:12}
A. Beck  and  L. Tetruashvilli, \emph{On the convergence of
block-coordinate descent type methods}, submitted to Mathematical
Programming, 2012.


\bibitem{BerKov:93}
P. Berman, N. Kovoor and P.M. Pardalos, \emph{Algorithms for least
distance problem}, Complexity in Numerical Optimization, P.M.
Pardalos ed., World Scientific, 33--56, 1993.


\bibitem{Ber:03}
D.P. Bertsekas, \emph{Parallel and Distributed Computation: Numerical Methods},
Athena Scientific, 2003.


\bibitem{Ber:99}
 D.P. Bertsekas, \emph{Nonlinear Programming}, Athena Scientific, 1999.


\bibitem{CanRom:06}
E. Candes, J. Romberg and T. Tao, \emph{Robust Uncertainty
principles: Exact signal reconstruction from highly incomplete
frequency information}, IEEE Transactions on Information Theory, 52,
489--509, 2006.


\bibitem{CheDon:01}
S. Chen, D. Donoho and M. Saunders, \emph{Atomic decomposition by
basis pursuit}, SIAM Review, 43, 129--159, 2001.


\bibitem{ChaLin:11}
C.C. Chang and C.J. Lin, \emph{LIBSVM: a library for support vector
machines}, ACM Transactions on Intelligent Systems and Technology,
27, 1--27, 2011.


\bibitem{DaiFle:06}
Y.H. Dai and R. Fletcher, \emph{New algorithms for singly linearly
constrained quadratic programs subject to lower and upper bounds},
Mathematical Programming, 106 (3), 403--421, 2006.


\bibitem{FerMun:03}
M.C. Ferris and T.S. Munson, \emph{Interior-point methods for
massive support vector machines}, SIAM Journal of Optimization, 13
(3), 783--804, 2003.


\bibitem{HusKel:06}
D. Hush, P. Kelly, C. Scovel and I. Steinwart, \emph{QP algorithms
with guaranteed accuracy and run time for support vector machines},
Journal of Machine Learning Research, 7, 733--769, 2006.


\bibitem{JudRay:08}
J. Judice, M. Raydan, S. Rosa and  S. Santos, \emph{ On the solution
of the symmetric eigenvalue complementarity problem by the spectral
projected gradient algorithm}, Numerical Algorithms,  47, 391--407,
2008.


\bibitem{Kiw:07}
K.C. Kiwiel, \emph{On linear-time algorithms for the continuous
quadratic knapsack problem}, Journal of Optimization Theory and
Applications, 134, 549--554, 2007.


\bibitem{LinLuc:09}
C.J. Lin, S. Lucidi, L. Palagi, A. Risi and M. Sciandrone, \emph{A
decomposition algorithm model for singly linearly constrained
problems subject to lower and upper bounds}, Journal of Optimization
Theory and Applications, 141,  107--126, 2009.


\bibitem{LisSim:05}
N. List and H.U. Simon, \emph{General polynomial time decomposition
algorithms},  Lecture Notes in Computer Science, vol. 3559,
Springer, 308--322, 2005.


\bibitem{NecNed:11}
I. Necoara, V. Nedelcu and I. Dumitrache, \emph{Parallel and
distributed optimization methods for estimation and control in
networks}, Journal of Process Control, 21(5), 756--766, 2011.


\bibitem{NecNes:12}
I. Necoara, Y Nesterov and F. Glineur, \emph{ A random coordinate
descent method on large optimization problems with linear
constraints}, Technical Report, University Politehnica Bucharest,
2011 (http://acse.pub.ro/person/ion-necoara).


\bibitem{NesShp:12}
Yu. Nesterov and S. Shpirko, \emph{Primal-dual subgradient method
for huge-scale linear conic problems}, Optimization Online, 2012.


\bibitem{Nes:04}
Y. Nesterov, \emph{Introductory Lectures on Convex Optimization: A
Basic Course}, Kluwer, 2004.


\bibitem{Nes:07}
Y. Nesterov, \emph{Gradient methods for minimizing composite
objective functions}, Core discussion paper, 76/2007, Universite
Catholique de Louvain, 2007.


\bibitem{Nes:10}
Y. Nesterov, \emph{Efficiency of coordinate descent methods on
huge-scale optimization problems},  SIAM Journal on Optimization
22(2), 341--362, 2012.


\bibitem{Pla:99}
J.C. Platt, \emph{ Fast training of support vector machines using
sequential minimal optimization}, Advances in Kernel Methods:
Support Vector Learning, MIT Press, 1999.


\bibitem{QinSch:10}
Z. Qin, K. Scheinberg and D. Goldfarb, \emph{ Efficient
Block-coordinate Descent Algorithms for the Group Lasso}, submitted,
2010.


\bibitem{RicTak:11}
P. Richtarik and M. Takac, \emph{Iteration complexity of randomized
block-coordinate descent methods for minimizing a composite
function},  Mathematical Programming,  2012.


\bibitem{RicTak:12}
P. Richtarik and M. Takac, \emph{Efficient serial and parallel
coordinate descent methods for huge-scale truss topology design},
Operations Research Proceedings,  27--32, 2012.


\bibitem{Roc:67}
R.T. Rockafeller, \emph{The elementary vectors of a subspace in
$\rset^N$}, Combinatorial Mathematics and its Applications,
Proceedings of the Chapel Hill Conference 1967, R.C. Bose and T.A.
Downling eds., Univ. North Carolina Press, 104--127, 1969.


\bibitem{Roc:84}
R.T. Rockafeller, \emph{Network flows and Monotropic Optimization},
Wiley-Interscience, 1984.


\bibitem{SahTew:12}
A. Saha and A. Tewari, \emph{On the non-asymptotic convergence of
cyclic coordinate descent methods}, submitted to SIAM Journal on
Optimization, 2010.


\bibitem{TseYun:06}
P. Tseng and S. Yun, \emph{A Coordinate Gradient Descent Method for
Nonsmooth Separable Minimization}, Mathematical Programming, 117,
387--423, 2009.


\bibitem{TseYun:07}
P. Tseng and S. Yun, \emph{A Coordinate Gradient Descent Method for
Linearly Constrained Smooth Optimization and Support Vector Machines
Training}, Computational Optimization and Applications, 47,
179--206, 2010.


\bibitem{TseYun:09}
P. Tseng and S. Yun, \emph{A Block-Coordinate Gradient Descent
Method for Linearly Constrained Nonsmooth Separable Optimization},
Journal of Optimization Theory and Applications, 140, 513--535,
2009.


\bibitem{XiaBoy:06}
L. Xiao and S. Boyd,  \emph{ Optimal Scaling of a gradient method
for distributed resource allocation}, Journal of Optimization Theory
and Applications, 129, 2006.


\bibitem{XuFre:03}
S. Xu, M. Freund R and J. Sun, \emph{Solution methodologies for the
smallest enclosing circle problem}, Computational Optimization and
Applications, 25(1-3), 283--292, 2003.

\end{thebibliography}
\end{document}